\long\def\symbolfootnote[#1]#2{\begingroup%
\def\thefootnote{\fnsymbol{footnote}}\footnote[#1]{#2}\endgroup}
\newcommand{\F}{\mathscr{F}}
\newcommand{\A}{\mathscr{A}}
\newcommand{\diag}{\textup{diag}}
\newcommand{\GL}{\mathrm{GL}}
\newcommand{\M}{\mathrm{M}}
\def\imod#1{\allowbreak\mkern10mu({\operator@font mod}\,\,#1)}
\newtheorem{mainthm}{Theorem}
\newtheorem{theorem}{Theorem}[section]
\newtheorem{problem}{Conjecture}
\newtheorem{lemma}[theorem]{Lemma}
\newtheorem{proposition}[theorem]{Proposition}
\newtheorem*{theorem*}{Theorem}
\theoremstyle{definition}
\newtheorem{example}[theorem]{Example}
\numberwithin{equation}{section}
\newcommand{\ignore}[1]{}
\newcommand{\mynote}[1]{}
\begin{document}
\setcounter{section}{0}
\title{Images of Polynomial maps with constants}
\author[Panja S.]{Saikat Panja}
\address{Harish-Chandra Research Institute Prayagraj (Allahabad), Uttar Pradesh 211019, India}
\email{panjasaikat300@gmail.com}
\author[Saini P.]{Prachi Saini}
\address{IISER Pune, Dr. Homi Bhabha Road, Pashan, Pune 411 008, India}
\email{prachi2608saini@gmail.com}
\author[Singh A.]{Anupam Singh}
\address{IISER Pune, Dr. Homi Bhabha Road, Pashan, Pune 411 008, India}
\email{anupamk18@gmail.com}
\thanks{The first-named author is supported by the HRI PDF-M scholarship. The second-named author acknowledges the support of CSIR PhD scholarship number 09/0936(1237)/2021-EMR-I. The third-named author is funded by an NBHM research grant 02011/23/2023/NBHM(RP)/RDII/5955 for this research.}
\subjclass[2010]{16S50,11P05}
\today
\keywords{Polynomial maps, Simultaneous conjugacy, Matrix algebra}


\begin{abstract}
Let $K$ be an algebraically closed field and $\M(2,K)$ be the $2\times 2$ matrix algebra over $K$ and $\GL(2,K)$ be the invertible elements in $\M(2,K)$. We explore the image of polynomials with constants, namely from the free algebra $\M(2,K)\langle x, y\rangle$. In this article, we compute the images of the polynomial maps given by 
(a) generalized sum of powers $Ax^{k_1} + By^{k_2}$ and (b) generalized commutator map $Axy -Byx$, 
where $A$, $B$ are non-zero elements of $\M(2,K)$. We compute this in the first case by fixing a simultaneous conjugate pair for $A, B$ and it turns out that it is surjective in most of the cases. In the second case, we show that the image of the map is always a vector space. 
\end{abstract}
\maketitle
\section{Introduction}
\subsection*{Polynomial maps on algebras} 
For a given polynomial, finding a solution in an algebra has been a driving force behind much research since antiquity. 
Be it algebraic geometry, the Diophantine equations or the Waring-type 
problem, the common link among them is finding a solution to a polynomial 
equation, equivalently 
finding the image of the given polynomial. For a $K$-algebra $\A$ and an element $w\in \F_n=K\langle x_1, \ldots, x_n\rangle$, the 
free algebra of rank $n$, we get a natural map
\begin{align*}
    \widetilde{w}\colon\A^n\longrightarrow \A,
\end{align*}
by substitution. These are known as \emph{polynomial maps}.
The motivation behind studying the polynomial maps in algebra comes from the famous Kaplansky-L\'{v}ov conjecture {(see \cite{Kap1957})} (which asserts that the image of a multilinear map on the full matrix algebra
over a field is a vector space) and the expectation of 
results in simple algebra that are parallel to the results in simple groups. The celebrated result of
Larsen, Shalev and Tiep {\color{black}(see \cite{LarShaTie2011})} states that for a non-trivial $w \in \F_d$, the free group on
$d$ generators, there exists a constant $N = N_w$ such that
for all finite non-abelian simple groups $G$ of order greater than $N$, we have
$w(G)^2 = G$. Larsen suggested an analogous result for $\M(n,q)$. It was proved that for sufficiently 
large $q$, every matrix in $\M(n,q)$ can be written as a sum of two $K$-th powers, in
\cite{Kishore2022} and \cite{KishoreSingh2022}. This is equivalent to stating that
the map $x^k+y^k$ is surjective on $\M(n,q)$ for sufficiently large $q$.
This was further generalized in \cite{panja2023surjectivity}, for the map $\sum\limits_{i=1}^{m}
\alpha_i x_i^{k_i}$, for $m\geq 2$ and the rings $\M(n,D)$ with $D=\mathbb{C}, \mathbb{F}_q, \mathbb{R},
\mathbb{H}_\mathbb{R}$. This is commonly known as \emph{Waring-type problem} which asks if
the elements of the subalgebra generated by the image $\widetilde{\omega}(\A^m)$ can be written as a sum of $\ell$ many elements of
$\widetilde{\omega}(\A^m)$, where $\ell$ is a fixed positive integer. Let us list out some references where other algebras have been considered, the case 
central simple algebras in \cite{Pu07}, the case of $\M(n, R)$ with $R$ being a commutative ring in \cite{KaGa13}, the case of $\M(n,\mathbb Z)$ in \cite{Lee2018}, the upper triangular matrix algebra in 
\cite{kaushik2023waring}.
Other instances of Waring-type problems occur in the works \cite{Bresar2020}, \cite{BresarSemerl2022} and \cite{BresarSemerl2021}.
The Kaplansky-L\'{v}ov conjecture has been quite a driving force to a tremendous amount of research in recent times. While it  
remains open for most of the cases, some progress has been made for the case of $\M(2,K)$ in 
\cite{BeMaRo12}, the case of $\M(3,K)$ in \cite{BeMaRo16}. We remark that the last two articles do not 
solve the problem for a general field, but do it with mild restrictions on the field. A parallel 
result holds in the case of upper triangular matrix algebras, which is known as {F}agundes-{M}ello 
conjecture, and was solved in \cite{GadeMe22} an \cite{LuWa22} independently. A more generalized 
result has been proved in \cite{PaPr23}. For a survey, we suggest the reader look into \cite{BeMaRoYa20} and the references therein.

\subsection*{Polynomial maps with constants on algebras}

Let $K$ be a field, $\F_n$ denote the free algebra of rank $n$ over $K$ and $\A$ be an associative algebra over $K$. An element $\omega$ of the free associative algebra $\A*\F_n = \A\langle x_1, \ldots, x_n\rangle$ induces a map on $\A$
\begin{align*}
    \widetilde{\omega}\colon\A^n\longrightarrow\A,
\end{align*}
through evaluation. These maps are called \emph{polynomial maps with constants}. These are a natural generalization of polynomial maps. 
Before moving further we present a few examples.
\begin{example}
Let $K$ be a field and $\A = \M(n,k)$ be the matrix algebra. For positive integers $k_1, \ldots, k_m$ we consider $\omega = A_1 x_1^{k_1} + \cdots +A_m x_m^{k_m} \in \M(n,k)\langle x_1, \ldots, x_m\rangle$ for $A_1, \ldots A_m \in \M(n,k)$. This could be called a generalised diagonal map.  
\end{example}
\begin{example}
Let $K$ be a field and $\A = \M(n,k)$ be the matrix algebra. Consider $\omega = Axy - Byx \in \M(n,k)\langle x, y\rangle$ for $A, B \in \M(n,k)$.  
\end{example}
There is a parallel study of these maps, in the case of groups, which are known as \emph{word maps with constants}, see \cite{GoNiKu18}. The goal of this paper is to broaden the viewpoint on the subjectivity problem from polynomial maps on algebras to polynomial maps with constants on algebras. In this paper, we study the maps in the above examples over $\M(2,k)$ with two variables and determine the images. We prove the following:  

\begin{mainthm}\label{thm:A}
Let $K$ be an algebraically closed field. Consider the polynomial map 
$\widetilde{w}$ given by $Ax^{k_1} + By^{k_2}$ on $M_2(k)$ where $A, B \in 
\M(2,K)$ both non-zero. Then,
$\widetilde{w}$ is surjective if and only if $A$ and $B$ can be simultaneously 
conjugated to a pair of matrices such that both the matrices do not have the 
same zero rows. Further, for a choice of $A, B$ up to simultaneous conjugation 
the images are described in \cref{tableIV}, 
which are vector spaces of $\M(2,K)$. 
\end{mainthm}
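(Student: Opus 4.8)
The plan is to reduce the computation to a single power map and then run a case analysis on the ranks of $A$ and $B$. First I would record two structural facts. Writing $S_k=\{x^k : x\in\M(2,K)\}$ for the image of the $k$-th power map, the set $S_{k_i}$ is invariant under conjugation and, since $K$ is algebraically closed, closed under scalar multiplication, because $c\,x^k=(c^{1/k}x)^k$. Consequently the image $\widetilde w(\M(2,K)^2)=A\,S_{k_1}+B\,S_{k_2}$ is a cone, and replacing $A,B$ by $PAP^{-1},PBP^{-1}$ conjugates the image by $P$; this is exactly what lets us fix a simultaneous-conjugation normal form for $(A,B)$ and read the answer off \cref{tableIV}. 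The first genuine step is to identify $S_k$: using the Jordan form over $K$ and extracting eigenvalue roots, every matrix with two distinct eigenvalues, every scalar, and (when $k$ is invertible in $K$) every non-semisimple matrix with nonzero eigenvalue is a $k$-th power, while a nonzero nilpotent is never a $k$-th power for $k\ge 2$. Thus $S_k=\M(2,K)\setminus\mathcal N^{*}$, where $\mathcal N^{*}$ is the set of nonzero nilpotents; in particular $S_k\supseteq\{0\}\cup\GL(2,K)$.

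The second step computes $A\,S_k$ according to $\operatorname{rank}A$. If $\operatorname{rank}A=1$, I claim $A\,S_k$ equals the left-multiplication image $A\cdot\M(2,K)=\{M:\operatorname{Col}M\subseteq\operatorname{Col}A\}$, a $2$-dimensional subspace. Indeed, writing $A$ as a rank-one outer product, the products $Ag$ with $g\in\GL(2,K)$ already sweep out every nonzero matrix whose column space lies in $\operatorname{Col}A$, so $A\,S_k\supseteq (A\cdot\GL(2,K))\cup\{0\}=A\cdot\M(2,K)$, and the reverse inclusion is clear. If $A$ is invertible, then $A\,S_k=\M(2,K)\setminus A\mathcal N^{*}$, which misses only a $2$-dimensional cone of rank-one matrices.

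The third step assembles the sum $A\,S_{k_1}+B\,S_{k_2}$ by cases. If both $A,B$ are invertible, the image contains $\GL(2,K)+\GL(2,K)=\M(2,K)$ (as $K$ is infinite), so $\widetilde w$ is surjective. If exactly one, say $A$, is invertible, a fill-in argument settles it: any $m$ not already in $A\,S_{k_1}$ is a rank-one matrix, and since $B\,S_{k_2}$ is a $2$-dimensional space transverse to the missing cone $A\mathcal N^{*}$, one can subtract a suitable element of it so that $m$ lands in $A\,S_{k_1}$; hence the image is all of $\M(2,K)$. If both have rank $1$, then by the second step
\[
A\,S_{k_1}+B\,S_{k_2}=\{M:\operatorname{Col}M\subseteq\operatorname{Col}A+\operatorname{Col}B\},
\]
which is $\M(2,K)$ when $\operatorname{Col}A$ and $\operatorname{Col}B$ are independent and a proper $2$-dimensional subspace when they coincide. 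In every case the image is a vector space, and $\widetilde w$ is surjective precisely when $\operatorname{Col}A+\operatorname{Col}B=K^2$, i.e.\ unless $A,B$ are both rank one with a common column space. Choosing the simultaneous-conjugation representatives so that each rank-one matrix carries its column space on a coordinate axis (hence has a zero row), ``common column space'' becomes ``common zero row'', which is the criterion in the statement; running the same computation on each normal form fills in \cref{tableIV}.

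The main obstacle I expect is twofold. The delicate point is the exact determination of $S_k$, in particular the behaviour when $\operatorname{char}K\mid k$, where a non-semisimple matrix ceases to be a $k$-th power and $S_k$ shrinks to essentially the semisimple locus; one must check that this does not enlarge the missing set in a way that breaks the rank-one computation or the fill-in step, which it does not, because the semisimple matrices still sweep out every column space. The more bookkeeping-heavy point is verifying, in the mixed-rank case, that the sum leaves \emph{no} rank-one matrix behind, so that the image is genuinely a subspace rather than a cone with a few missing lines; this is exactly where the transversality in the fill-in argument must be made precise, and where organizing the normal forms for \cref{tableIV} does the real work.
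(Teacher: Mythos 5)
Your route is genuinely different from the paper's. The paper fixes simultaneous-conjugacy representatives for $(A,B)$ (Section~2) and then, for each representative pair and each target $C$, exhibits explicit witnesses $x,y$ in a long case analysis (Propositions~3.1, 3.3, 3.8). You instead compute the image structurally as $A\,S_{k_1}+B\,S_{k_2}$, where $S_k$ is the image of the power map, and reduce everything to the ranks of $A$ and $B$. Your rank-one computation ($A\,S_k=A\cdot\M(2,K)$ when $\operatorname{rank}A=1$, valid in every characteristic because semisimple matrices already realize every column space) is correct, and it is the conceptual heart of the matter: it explains at once why the only possible images are $\M(2,K)$ and the two $2$-dimensional spaces appearing in \cref{tableIV}, and why the obstruction is exactly a common column space for two rank-one matrices, which after conjugation is the ``common zero row'' of the statement. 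This is shorter and more illuminating than the paper's enumeration.

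Two steps, however, are not yet proofs. First, the claim $S_k\supseteq\{0\}\cup\GL(2,K)$ is false when $\operatorname{char}K$ divides $k$: writing a non-semisimple invertible matrix as $\mu I+N$ with $N^2=0$ gives $(\mu I+N)^k=\mu^kI+k\mu^{k-1}N=\mu^kI$, so no non-semisimple matrix is a $k$-th power. Consequently your argument for the case where both $A$ and $B$ are invertible --- decomposing an arbitrary $M$ as a sum of two invertible matrices and placing one in $A\,S_{k_1}$ and the other in $B\,S_{k_2}$ --- does not go through as written in that characteristic; your caveat about $\operatorname{char}K\mid k$ only repairs the rank-one computation, not this case. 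It can be fixed (e.g.\ take $x$ diagonal with distinct entries and show $B^{-1}(C-Ax^{k_1})$ has distinct eigenvalues for a suitable choice, which is in effect what the paper does case by case), but the fix is not automatic. Second, the ``fill-in'' step when exactly one of $A,B$ is invertible is asserted rather than proved: you must show that the affine plane $C-B\,\M(2,K)$ is not contained in the missing set $A\mathcal{N}^{*}$ (or, in bad characteristic, in $A\cdot\{\text{non-semisimple matrices}\}$). The word ``transverse'' hides real content here, since the relevant discriminant quadric does contain $2$-dimensional linear subspaces; one must check that $A^{-1}B\,\M(2,K)$, which consists of rank-$\le 1$ matrices $uw^{t}$, contains non-nilpotent elements and hence is not an isotropic plane of that quadric. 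With these two points supplied your argument yields a complete and considerably shorter proof of Theorem~A; without them, the both-invertible case in characteristic dividing $k_i$ and the mixed-rank case are genuine gaps.
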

\noindent The above theorem can be seen as an analogue of the Matrix-Waring problem.

\begin{mainthm}\label{thm:B}
Let $K$ be an algebraically closed field. Consider the polynomial map $\widetilde{w}$ given by $Axy - Byx$ on $M_2(k)$ where $A, B \in \M(2,K)$ both non-zero. Then, the image is a vector space.
\end{mainthm}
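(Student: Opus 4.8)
Write $\widetilde w(X,Y)=AXY-BYX$, and let $\mathfrak{sl}_2$ denote the trace-zero matrices. Since $\widetilde w(cX,Y)=c\,\widetilde w(X,Y)$ for every scalar $c$, the image is automatically closed under scaling, so the entire content of the statement is closure under addition, i.e.\ that the image equals its own linear span. The plan begins with two reductions. First, for $g\in\GL(2,K)$ one has $g^{-1}\widetilde w(X,Y)g=(g^{-1}Ag)(g^{-1}Xg)(g^{-1}Yg)-(g^{-1}Bg)(g^{-1}Yg)(g^{-1}Xg)$, so the image attached to $(A,B)$ is the conjugate of the image attached to the simultaneously conjugated pair; as conjugation is a linear automorphism of $\M(2,K)$, I may replace $(A,B)$ by any simultaneous conjugate. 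Second, I would start from the identity
\begin{equation*}
\widetilde w(X,Y)=B[X,Y]+(A-B)\,XY,
\end{equation*}
which, because $[X,Y]$ is trace-zero, immediately yields $\operatorname{Im}\widetilde w\subseteq W:=B\,\mathfrak{sl}_2+(A-B)\M(2,K)$, a genuine subspace. The remaining task is the reverse inclusion.

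For the reverse inclusion I would invoke the classical description of the pair $(XY,YX)$: for $2\times2$ matrices over an algebraically closed field, $(Z,W)$ equals $(XY,YX)$ for some $X,Y$ precisely when $Z$ and $W$ have the same characteristic polynomial. Putting $N=[X,Y]$ and $Z=XY$ (so that $W=YX=Z-N$), equality of characteristic polynomials reduces, via the $2\times2$ polarization of the determinant, to $\operatorname{tr}N=0$ together with $\det N+\operatorname{tr}(ZN)=0$. This gives the explicit description
\begin{equation*}
\operatorname{Im}\widetilde w=\bigl\{\,BN+(A-B)Z:\ N\in\mathfrak{sl}_2,\ Z\in\M(2,K),\ \det N+\operatorname{tr}(ZN)=0\,\bigr\}.
\end{equation*}
Two soft consequences follow: taking $N=0$ gives $(A-B)\M(2,K)\subseteq\operatorname{Im}\widetilde w$ (indeed $\widetilde w(I,Y)=(A-B)Y$), and, writing $\pi\colon\M(2,K)\to\M(2,K)/(A-B)\M(2,K)$ for the quotient map, $\operatorname{Im}\widetilde w$ surjects onto $\pi(B\mathfrak{sl}_2)$ since $N$ ranges over all of $\mathfrak{sl}_2$. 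If $A-B$ is invertible the first fact already gives $\operatorname{Im}\widetilde w=\M(2,K)$, and if $A=B$ the map is $B[X,Y]$, so surjectivity of the commutator onto $\mathfrak{sl}_2$ gives $\operatorname{Im}\widetilde w=B\mathfrak{sl}_2$; the only genuine case is $\operatorname{rank}(A-B)=1$, where I may conjugate $A-B$ to a fixed rank-one normal form.

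The heart of the argument is to upgrade ``surjective modulo $(A-B)\M(2,K)$'' to the equality $\operatorname{Im}\widetilde w=W$. Fix $N\in\mathfrak{sl}_2$ and work inside the affine hyperplane $H_N=\{Z:\operatorname{tr}(ZN)=-\det N\}$, examining the linear map $Z\mapsto(A-B)Z$ onto $(A-B)\M(2,K)$, whose kernel $K_0=\{Z:(A-B)Z=0\}$ is two-dimensional. For every $N$ outside the proper subspace $\{N:\operatorname{tr}(kN)=0\ \forall k\in K_0\}$, the functional $\operatorname{tr}(\,\cdot\,N)$ is non-constant along each coset of $K_0$, so $H_N$ meets every fibre of $Z\mapsto(A-B)Z$; hence $(A-B)H_N=(A-B)\M(2,K)$ and the full coset $BN+(A-B)\M(2,K)$ lies in $\operatorname{Im}\widetilde w$. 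As $N$ runs over these generic values, the cosets $BN+(A-B)\M(2,K)$ cover every coset of $(A-B)\M(2,K)$ in $W$ except possibly $(A-B)\M(2,K)$ itself, which is already contained in the image. This forces $\operatorname{Im}\widetilde w=W$, a vector space, and one then checks the finitely many degenerate rank-one normal forms directly.

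The step I expect to be the main obstacle is precisely this last one. The constraint $\det N+\operatorname{tr}(ZN)=0$ couples $N$ and $Z$ nonlinearly through $\det N$, so a priori the image is only a union of affine slices, and the two soft facts above—that the image contains the subspace $(A-B)\M(2,K)$ and surjects onto $W$ modulo it—do \emph{not} by themselves force the image to be all of $W$, since a cone that contains a subspace and surjects onto the quotient need not fill the ambient space. What rescues the argument is the quantitative fact that for generic $N$ the freedom in $Z$ transverse to the fibres of $Z\mapsto(A-B)Z$ suffices to absorb the nonlinear constraint while $(A-B)Z$ still sweeps out all of $(A-B)\M(2,K)$; establishing this genericity cleanly, and handling the boundary normal forms where it degenerates, is where the real work lies.
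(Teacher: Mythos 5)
Your overall strategy is genuinely different from the paper's, which fixes simultaneous-conjugacy normal forms for $(A,B)$ as in Section~2 and exhibits explicit preimages case by case. The correct parts of your plan are attractive: the containment $\operatorname{Im}\widetilde w\subseteq B\,\mathfrak{sl}_2+(A-B)\M(2,K)$, the observation that $\widetilde w(I,Y)=(A-B)Y$ already yields the whole subspace $(A-B)\M(2,K)$, and surjectivity onto the quotient via surjectivity of the commutator onto $\mathfrak{sl}_2$ are all sound and would streamline the paper's argument considerably. But as a proof the proposal has two genuine gaps.

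First, the ``classical description'' you invoke is false for $2\times 2$ matrices: equality of characteristic polynomials is necessary but not sufficient for $(Z,W)=(XY,YX)$. If $Z=\lambda I$ with $\lambda\neq 0$ and $W=\lambda I+N'$ with $N'\neq 0$ nilpotent, the characteristic polynomials agree, yet $XY=\lambda I$ forces $X,Y$ to be invertible and hence $YX=X^{-1}(XY)X=\lambda I\neq W$. Consequently your displayed formula for $\operatorname{Im}\widetilde w$ is only an inclusion $\subseteq$, and proving that the right-hand side equals $W$ would say nothing about the image. (The defect is repairable: every non-realizable same-characteristic-polynomial pair has $N=Z-W$ a nonzero nilpotent, so the description is correct once $N$ is restricted to non-nilpotent elements of $\mathfrak{sl}_2$ --- but then ``generic'' must also include non-nilpotency and the subsequent step must be redone.) Second, the covering step --- that the cosets $BN+(A-B)\M(2,K)$ for generic $N$ exhaust every nonzero coset of $(A-B)\M(2,K)$ inside $W$ --- is asserted rather than proved, and it is exactly where the content lies: the fibre of the linear map $N\mapsto BN \bmod (A-B)\M(2,K)$ over a fixed nonzero coset can be a single point of $\mathfrak{sl}_2$, and nothing in the argument prevents that point from lying in your excluded locus (nilpotent, or annihilating $K_0$ under the trace pairing). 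You acknowledge this yourself and defer both it and the degenerate rank-one normal forms; until those are carried out the proposal is a plan rather than a proof, and it is precisely in those degenerate configurations that $W$ is a proper subspace (for instance the span of the matrices supported on the first row) and the image must still be shown to fill it, which is where the paper spends all of its effort.
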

\noindent This theorem is parallel to that of the L\'{v}ov-Kaplansky 
conjecture. 
\begin{longtable}{l|c|r}
\toprule
Choice of $A$& Choice of $B$& Image of $\widetilde{w}$ \bigstrut\\
    \midrule\endfirsthead
    \toprule
Choice of $A$& Choice of $B$& Image of $\widetilde{w}$ \bigstrut\\
    \midrule\endhead
$\diag(\lambda, \lambda)$ & $\diag(\xi, \xi)$ & $\M(2,K)$  \\
$\diag(\lambda, \lambda)$ & $\diag(\xi_1, \xi_2), \xi_1\neq\xi_2$  & $\M(2,K)$  \\
$\diag(\lambda, \lambda)$ & $\begin{pmatrix} \xi & 1 \\ & \xi\end{pmatrix}$ & $\M(2,K)$  \\
$\diag(\lambda, \mu), \lambda\neq \mu$ & $\diag(\xi, \xi)$ & $\M(2,K)$  \\
$\diag(\lambda, \mu), \lambda\neq \mu$ & $\diag(\xi_1, \xi_2), \xi_1\xi_2\neq 0$ & $\M(2,K)$  \\
$\diag(\lambda, \mu), \lambda\neq 0$ & $\diag(\xi_1, \xi_2), \xi_2\neq 0$ & $\M(2,K)$  \\
$\diag(\lambda, \mu), \mu\neq 0$ & $\diag(\xi_1, \xi_2), \xi_1\neq 0$ & $\M(2,K)$  \\
$\diag(\lambda, 0)$ & $\diag(\xi, 0)$ & $\begin{pmatrix} * & * \\ & \end{pmatrix}$  \\
$\diag(0, \mu)$ & $\diag(0, \xi)$ & $\begin{pmatrix}  &  \\ *&* \end{pmatrix}$  \\
$\diag(\lambda, \mu), \lambda\mu\neq 0$ &  $\begin{pmatrix} \xi & 1  \\ &\xi \end{pmatrix}$  & $\M(2,K)$\\
$\diag(\lambda, \mu), \lambda\mu = 0$ &  $\begin{pmatrix} \xi & 1  \\ &\xi \end{pmatrix}, \xi\neq 0$  & $\M(2,K)$\\
$\diag(\lambda, \mu), \mu \neq 0$ &  $\begin{pmatrix} 0 & 1  \\ &0 \end{pmatrix}$  & $\M(2,K)$\\
$\diag(\lambda, 0)$ &  $\begin{pmatrix} 0 & 1  \\ &0 \end{pmatrix}$  &$\begin{pmatrix} * & * \\ & \end{pmatrix}$  \\
$\diag(\lambda, \mu), \lambda\mu\neq 0$ &  $\begin{pmatrix} \xi_1 & 1  \\ &\xi_2 \end{pmatrix}, \xi_1\neq \xi_2$  & $\M(2,K)$\\
$\diag(\lambda, \mu), \lambda\mu = 0$ &  $\begin{pmatrix} \xi_1 & 1  \\ &\xi_2 \end{pmatrix}, \xi_1\xi_2\neq 0$  & $\M(2,K)$\\
$\diag(\lambda, \mu), \mu \neq 0$ &  $\begin{pmatrix} \xi & 1  \\ &0 \end{pmatrix}$  & $\M(2,K)$\\
$\diag(\lambda, 0)$ &  $\begin{pmatrix} \xi & 1  \\ &0 \end{pmatrix}$  &$\begin{pmatrix} * & * \\ & \end{pmatrix}$  \\
$\diag(\lambda, 0)$ &  $\begin{pmatrix} 0& 1  \\ &\xi \end{pmatrix}, \xi\neq 0$  & $\M(2,K)$  \\
$\diag(0, \mu)$ &  $\begin{pmatrix} 0 & 1  \\ &\xi \end{pmatrix}$  &$\M(2,K)$  \\
$\diag(\lambda, \mu), \lambda\mu\neq 0$ &  $\begin{pmatrix} \xi &   \\ 1&\xi \end{pmatrix}$  & $\M(2,K)$\\
$\diag(\lambda, \mu), \lambda\mu = 0$ &  $\begin{pmatrix} \xi &   \\ 1&\xi \end{pmatrix}, \xi\neq 0$  & $\M(2,K)$\\
$\diag(\lambda, \mu), \lambda \neq 0$ &  $\begin{pmatrix} 0 & 0  \\ 1 &0 \end{pmatrix}$  & $\M(2,K)$\\
$\diag(0, \mu)$ &  $\begin{pmatrix} 0 & 0  \\ 1&0 \end{pmatrix}$  &$\begin{pmatrix} 0 & 0 \\ *&* \end{pmatrix}$  \\ 
$\diag(\lambda, \mu), \lambda\mu\neq 0$ &  $\begin{pmatrix} \xi_1 & 0  \\ 1&\xi_2 \end{pmatrix}, \xi_1\neq \xi_2$  & $\M(2,K)$\\
$\diag(\lambda, \mu), \lambda\mu = 0$ &  $\begin{pmatrix} \xi_1 & 0  \\ 1&\xi_2 \end{pmatrix}, \xi_1\xi_2\neq 0$  & $\M(2,K)$\\
$\diag(\lambda, 0)$ &  $\begin{pmatrix} \xi &   \\ 1 &\xi_2 \end{pmatrix}$  & $\M(2,K)$  \\
$\diag(0, \mu)$ &  $\begin{pmatrix}  \xi_1& 0  \\ 1&\xi_2 \end{pmatrix}, \xi_1\neq 0$  &$\M(2,K)$  \\ 
$\diag(0, \mu)$ &  $\begin{pmatrix}  0& 0  \\ 1 &\xi \end{pmatrix}$  &  $\begin{pmatrix} 0 & 0 \\ *&* \end{pmatrix}$ \\ 
$\diag(\lambda, \mu), \lambda \neq \mu$ &  $\begin{pmatrix} \xi_1 & \xi_2  \\ 1&\xi_3 \end{pmatrix}, \xi_i\neq 0$  & $\M(2,K)$\\
$\diag(\lambda, \mu), \lambda\mu\neq 0$ &  $\begin{pmatrix} 0&\xi_2    \\ 1 &\xi_3 \end{pmatrix}, \xi_3\neq 0$  & $\M(2,K)$  \\
$\diag(\lambda, \mu)$ &  $\begin{pmatrix}  0&\xi_2  \\ 1&\xi_3 \end{pmatrix}, \xi_i\neq 0$  &$\M(2,K)$  \\ 
$\diag(0, \mu)$ &  $\begin{pmatrix}  0& 0  \\ 1 &\xi \end{pmatrix}$  &  $\begin{pmatrix} 0 & 0 \\ *&* \end{pmatrix}$ \\ 
$\diag(\lambda, \mu), \lambda \neq \mu$ &  $\begin{pmatrix} \xi_1 & \xi_2  \\ 1& 0 \end{pmatrix}, \xi_1\neq 0$  & $\M(2,K)$\\
$\diag(\lambda, \mu), \lambda\neq \mu$ &  $\begin{pmatrix} 0&\xi    \\ 1 & 0 \end{pmatrix}, \xi\neq 0$  & $\M(2,K)$  \\
$\begin{pmatrix}\lambda & 1\\ & \lambda\end{pmatrix}, \lambda\neq 0$ &  $\begin{pmatrix} \xi &   \\ z &\xi \end{pmatrix}, \xi z\neq 0$  &$\M(2,K)$  \\
$\begin{pmatrix} & 1\\ 0&0\end{pmatrix}$ &  $\begin{pmatrix} \xi_1 &   \\ z &\xi_2 \end{pmatrix}, \xi_1\neq \xi_2, z\neq 0$  &$\M(2,K)$  \\
$\begin{pmatrix}\lambda & 1\\ & \lambda\end{pmatrix}, \lambda\neq 0$ &  $\begin{pmatrix} \xi_1 &   \\  &\xi_2 \end{pmatrix}, \xi_1 \neq \xi_2$  &$\M(2,K)$  \\
$\begin{pmatrix} 0 & 1\\ & 0 \end{pmatrix}$ &  $\begin{pmatrix}  & z  \\ 0 &0 \end{pmatrix}$  &   $\begin{pmatrix} * & * \\ & \end{pmatrix}$ \\
$\begin{pmatrix}\lambda & 1\\ & \lambda\end{pmatrix}$ &  $\begin{pmatrix} \xi & z  \\  &\xi \end{pmatrix}, \xi \neq 0$  &$\M(2,K)$  \\
\caption{Images of $Ax^{k_1} + By^{k_2}$}\label{tableIV}
\end{longtable}
\vskip2mm

Hereafter, \textcolor{blue}{it will be assumed that the field $K$ 
is an algebraically closed field}.
We deploy the method of simultaneous conjugation to simplify the problem.
In general finding the class representative for $(A, B)$ under simultaneous conjugacy is a wild problem. However, for $n=2$ we write this explicitly in 
\cref{sec:reduction}. With the help of this, we compute the images for each of 
those cases and prove \cref{thm:A} in \cref{sec:proof-theorem-A}. The 
\cref{thm:B} is proved \cref{sec:proof-theorem-B}. We hope this work sheds some light on the general problem.


\subsection*{Acknowledgement} This work started when the first-named author visited IISER Pune during a workshop in January 2023. The work was completed during another workshop at HRI in December 2023 organised by Professor Manoj K. Yadav. We take this opportunity to thank the organizers and the hospitality of the respective institutes.

\section{Representatives for the simultaneous conjugacy classes}\label{sec:reduction}
In a previous work \cite{panja2023surjectivity}, the authors have proved that
the map induced by $\alpha x^{k_1}+\beta y^{k_2}$ for $\alpha\beta\neq 0$, 
is surjective on 
$\M(n,\mathbb{C})$, and it also follows from their proof that
the result holds for any algebraically closed field. This motivates us to look
into the case of the polynomial $Ax^{k_1}+By^{k_2}$. Now we explain our
strategy to tackle the mentioned problem in \cref{sec:proof-theorem-A}. A
somewhat similar method will be further used in \cref{sec:proof-theorem-B} and we will keep referring to this section, whenever needed.
For each $A\in \M(2,K)$, there exists $P_A \in \GL(2,K)$ such that $P_AAP_A^{-1}=J_A$ where $J_A$ denote the Jordan canonical form of $A$. We want to comprehend for which matrices in $\M(2,K)$ the solution of equation $Ax^{k_1}+By^{k_2}$ exists i.e. for which $C$ there exists matrices $X$ and $Y$ in $\M(2,K)$ such that $C=AX^{k_1}+BY^{k_2}$.
On conjugating by $P_A$, we get $C_A=J_Ax^{k_1}+B_Ay^{k_2}$ where $B_A$ and $C_A$ represent matrices corresponding to $B$ and $C$ obtained under conjugation by $P_A$ respectively. 

Let $C_{\GL_2}\left(J_A\right)$ denote the centralizer of $J_A$ in $\GL(2,K)$. Consider the group action of $C_{\GL_2}\left(J_A\right)$ on $\M(2,K)$ given as
\begin{align*}
    C_{GL_2}\left(J_A\right) \times \M(2,K) \longrightarrow \M(2,K) \\
(T, B_A) \longmapsto TB_AT^{-1}
\end{align*}
We consider the orbit space under the group action and determine the image i.e. for each $T\in C_{GL_2}\left(J_A\right)$, it is sufficient to determine the solution of $J_Ax^{k_1}+TB_AT^{-1}y^{k_2}$ and instead of taking $TB_AT^{-1}$ independently we consider the representative of each type. Thus we determine the orbit space under the action of each centralizer of each type of Jordan form existing over $K$. The Jordan forms appearing for $A \in \M(2,K)$ are one of the following type$$ \begin{pmatrix}
    \lambda & 0 \\
    0 & \lambda 
\end{pmatrix}_{\lambda\in k^{\times}} \begin{pmatrix}
    \lambda & 0 \\
    0 & \mu 
\end{pmatrix}_{\lambda\neq \mu} \begin{pmatrix}
    \lambda & 1 \\
    0 & \lambda 
\end{pmatrix}_{\lambda\in k}. $$ 
The \cref{Table-Centralizer} describes the centralizer corresponding to each Jordan form.

\begin{longtable}{c|c}
      \hline
   $J_A $  & $C_{\GL_2}\left(J_A\right)$ \bigstrut\\ 
   \midrule\endfirsthead
    \toprule
$J_A $  & $C_{\GL_2}\left(J_A\right)$ \bigstrut\\
    \midrule\endhead
  \\ $\begin{pmatrix}
    \lambda & 0 \\
    0 & \lambda 
\end{pmatrix}_{\lambda \neq 0}$ & $\GL(2,K)$  \\ [0.75 cm]
\hline
  \\
$\begin{pmatrix}
    \lambda & 0 \\
    0 & \mu 
\end{pmatrix}$ & $\left\{\begin{pmatrix}
    d_1 & 0 \\
    0 & d_2
\end{pmatrix}\middle| d_1d_2\neq 0\right\}$ \\ [0.75 cm]
\hline
  \\
$\begin{pmatrix}
    \lambda & 1 \\
    0 & \lambda 
\end{pmatrix}$ & $\left\{\begin{pmatrix}
    a_1 & b_1 \\
    0 & a_1
\end{pmatrix}\middle| a_1\neq 0\right\}$\\
\caption{Description of the centralizers}\label{Table-Centralizer}
\end{longtable}    
Now we aim to determine the orbit space under the action of each centralizer mentioned in \cref{Table-Centralizer} and note down the representative of the orbit spaces under the action. Let $B_A=\begin{pmatrix}
    a' & b'\\
    c' & d'
\end{pmatrix} \in \M(2,K).$

 \subsection{The case $C_{\GL_2} = \GL(2,K)$.} Then considering $B_A$ upto conjugation the orbit space have representatives as : $$ \begin{pmatrix}
    \mu_1 & 0 \\
    0 & \mu_1 
\end{pmatrix}_{\mu\in k^{\times}}, \begin{pmatrix}
    \mu_1 & 0 \\
    0 & \mu_2 
\end{pmatrix}_{\mu_1\neq \mu_2}, \begin{pmatrix}
    \mu_1 & 1 \\
    0 & \mu_1 
\end{pmatrix}_{\mu_1\in k}. $$ 
\subsection{The case $C_{\GL_2} = \left\{\begin{pmatrix}
    d_1 & 0 \\
    0 & d_2
\end{pmatrix}\middle| d_1d_2\neq 0\right\}$.}
Let $T=\begin{pmatrix}
    d_1 & 0\\
    0 & d_2
\end{pmatrix}$ in $C_{\GL_2}$, consider $TB_AT^{-1}$ given by $\begin{pmatrix}
    a' & d_1d_2^{-1}b'\\
    d_1^{-1}d_2c' & d'
\end{pmatrix}$. If $c$ is non-zero then it can be scaled to $1$ and if $c$ is zero and $b$ is non-zero, we scale $b$ upto $1$ and hence the representatives are:\\
\begin{gather*}
  \quad \quad \quad \begin{pmatrix}
   \mu_1 & 0 \\
    0 & \mu_1 \end{pmatrix}_{\mu\in k^{\times}}, \begin{pmatrix}
    \mu_1 & 0 \\
    0 & \mu_2 
\end{pmatrix}_{\mu_1\neq \mu_2}, \begin{pmatrix}
    \mu_1 & 1 \\
    0 & \mu_1 
\end{pmatrix}_{\mu_1\in k},\begin{pmatrix}
    \mu_1 & 1 \\
    0 & \mu_2 
\end{pmatrix}_{\mu_1\neq \mu_2},\begin{pmatrix}
    \mu_1 & 0 \\
    1 & \mu_1 
\end{pmatrix}_{\mu_1\in k},\\  \vspace{2cm}
\quad \quad \begin{pmatrix}
    \mu_1 & 0 \\
    1 & \mu_2 
\end{pmatrix}_{\mu_1\neq \mu_2}, \begin{pmatrix}
    z_1 & z_2 \\
    1 & z_3 
\end{pmatrix}_{z_i\neq 0},\begin{pmatrix}
    0 & z_2 \\
    1 & z_3 
\end{pmatrix}_{z_3\neq 0}, \begin{pmatrix}
    z_1 & z_2 \\
    1 & 0 
\end{pmatrix}_{z_1\neq 0}, \begin{pmatrix}
    0 & z_2 \\
    1 & 0 
\end{pmatrix}_{z_2\neq 0}. 
\end{gather*}\vspace{0.25cm}
\subsection{The case $C_{\GL_2}=\left\{\begin{pmatrix}
    a_1 & b_1 \\
    0 & a_1
\end{pmatrix}\middle| a_1\neq 0\right\}$.} 
Then $TB_AT^{-1}$ is $$\begin{pmatrix}
    a'+c'a_1^{-1}b_1 & \left(d'-a'\right)a_1^{-1}b_1+b'-c'(a_1^{-1}b_1)^2\\
    c' & d'-c'a_1^{-1}b_1
\end{pmatrix}$$ where $T\in C_{GL_2}.$ 
\color{black}
Since we can vary the matrices $\begin{pmatrix}
    a_1 & b_1 \\ & a_1
\end{pmatrix}\in C_{\GL_2}$, and $a_1\neq 0$, we denote the variable $a_1^{-1}b_1$ by $x$. Then $TB_AT^{-1}$ is given by
\begin{align*}
    \begin{pmatrix}
        a'+c'x & (d'-a')x+b'-c'x^2\\
        c' & d'-c'x
    \end{pmatrix}.
\end{align*}
Consider the case, when $c'\neq 0$. Then the matrix representatives are given by $\begin{pmatrix}
    \mu _1 & 0\\
    z & \mu_1
\end{pmatrix}_{\mu_1\in k^\times,z\in k}$, and $\begin{pmatrix}
    \mu_1 & 0\\
    z & \mu_2
\end{pmatrix}_{\mu_1\neq\mu_2,z\in k}$. If $c'=0$ and $d'\neq a'$, then the representatives are given by
$\begin{pmatrix}
 \mu_1 & \\ & \mu_2   
\end{pmatrix}_{\mu_1\neq\mu_2\in k}$. Finally if $c'=0$ and $a'=d'$, we have that the elements $\begin{pmatrix}
    a' & b'\\ & a'
\end{pmatrix}$ commute with the elements of $C_{\GL_2}$, hence giving the class representatives to be
$\begin{pmatrix}
    \mu_1&z\\ & \mu_1
\end{pmatrix}$ for $\mu_1,z\in k$, without being a zero matrix.
\color{black}

\section{Proof of theorem A}\label{sec:proof-theorem-A}
Let $\widetilde{B}$ denote the representative of an orbit space. By the action of $C_{\GL_2}$ on $B_A\in 
\M(2,K)$, we have reduced the equation $J_Ax^{k_1}+B_Ay^{k_2}$ to $J_Ax^{k_1}+\widetilde{B}y^{k_2}$. We lay 
emphasis on $C\in \M(2,K)$ which can be written as $J_AX^{k_1}+\widetilde{B}Y^{k_2}$ for some matrices $X$ 
and $Y$ in $\M(2,K)$. Also, if there exists $C'$ such that it can not be written as 
$J_AX^{k_1}+\widetilde{B}Y^{k_2}$ for any matrices $X$ and $Y$ in $\M(2,K)$ then the original map is not 
surjective as $P_A^{-1}T^{-1}C'TP_A$ is not in the image of $Ax^{k_1}+By^{k_2}$ where $P_A$ is the 
matrix corresponding to $A$ such that $P_AAP_A^{-1}=J_A$ and $T\in C_{\GL_2}\left(J_A\right)$. 
If $C$ is non-singular and $J_A$ or $\widetilde{B}$ is non-singular then taking $y=0$ or $x=0$ respectively 
gives us the solution, as $J_A^{-1}C$ or ${\widetilde{B}}^{-1}C$ can be written as $k_1$-th or $k_2$-th 
power over $\M(2,K)$. Indeed, we can do it up to conjugacy and the semisimple case is trivial to solve.
For the non-semisimple case, note that $\begin{pmatrix}
    \lambda & 1\\ & \lambda
\end{pmatrix}^n=\begin{pmatrix}
    \lambda^n& n\lambda^{n-1}\\ & \lambda^n
\end{pmatrix}$, which is conjugate to $\begin{pmatrix}
    \lambda^n & 1\\ & \lambda^n
\end{pmatrix}$, whenever $\lambda\neq 0$.
We contemplate the cases when $C$ is non-singular with $J_A$ and $\widetilde{B}$ are 
singular and the case when $C$ is singular. We divide the proof into three propositions. The first one of them is as follows.
\begin{proposition}\label{lem:power-A-scalar}
    Let $\omega=Ax^{k_1}+ By^{k_2}\in\M(2,K)\langle x, y\rangle$, with $A$, $B$ nonzero matrices. If $A$ is a scalar matrix, then the map $\widetilde{\omega}$ is surjective.
\end{proposition}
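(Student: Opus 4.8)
The plan is to exploit the fact that, since $A$ is a nonzero scalar matrix $A=\lambda I$ with $\lambda\neq 0$, conjugation fixes $A$, so the whole problem is governed by $B$; and to use the elementary observation that over an algebraically closed field every $2\times2$ matrix with two distinct eigenvalues is a $k$-th power for every $k\ge 1$ (diagonalise and extract $k$-th roots of the eigenvalues, which exist since $K=\overline{K}$). Write $\Delta(M)=(\operatorname{tr}M)^2-4\det M$ for the discriminant of the characteristic polynomial of $M$; then $\Delta(M)\neq0$ precisely when $M$ has distinct eigenvalues, in every characteristic. The goal is reduced to showing: given any $C\in\M(2,K)$, there is a matrix $W$ such that both $W$ and $C-BW$ have distinct eigenvalues. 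Granting this, $W$ is diagonalisable, hence $W=Y^{k_2}$ for some $Y$; and $\lambda^{-1}(C-BW)$ is diagonalisable, hence equals $X^{k_1}$ for some $X$; therefore $\lambda X^{k_1}+BY^{k_2}=(C-BW)+BW=C$, proving surjectivity.

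To produce such a $W$ I would argue by Zariski density. Consider the two conditions on $W\in\M(2,K)\cong\mathbb{A}^4$, namely $U_1=\{W:\Delta(W)\neq0\}$ and $U_2=\{W:\Delta(C-BW)\neq0\}$. Each is the non-vanishing locus of a polynomial in the entries of $W$, hence Zariski open; $U_1$ is nonempty (for instance $\diag(0,1)\in U_1$). Since $\mathbb{A}^4$ is irreducible, any two nonempty open sets meet, so it suffices to check that $U_2\neq\varnothing$, i.e. that $W\mapsto\Delta(C-BW)$ is not identically zero. Any $W\in U_1\cap U_2$ then does the job.

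The crux, and the step I expect to be the main obstacle, is the non-vanishing of $\Delta(C-BW)$, which is precisely where the hypothesis $B\neq0$ enters. If $B$ is invertible then $W\mapsto BW$ is onto $\M(2,K)$, so $C-BW$ takes every value, in particular one with distinct eigenvalues. If $B$ has rank one, then (using that $\Delta$ is conjugation-invariant and that conjugating $B$ merely reparametrises the range of $BW$) I may assume the column space of $B$ is $\langle e_1\rangle$, so that $BW$ runs over all matrices $\left(\begin{smallmatrix} r & s\\ 0 & 0\end{smallmatrix}\right)$ as $W$ varies. A direct computation then gives $\Delta(C-BW)=\bigl((c_{11}-r)-c_{22}\bigr)^2+4(c_{12}-s)c_{21}$, whose coefficient of $r^2$ equals $1$; hence it is a nonzero polynomial in $(r,s)$ in every characteristic, so $U_2\neq\varnothing$. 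This settles all nonzero $B$ uniformly, recovering at once the rows of \cref{tableIV} with $A$ scalar.

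Finally I would flag the characteristic-$p$ subtlety that this formulation is designed to sidestep: when $p\mid k$ a Jordan block with nonzero eigenvalue fails to be a $k$-th power (only diagonalisable matrices remain $k$-th powers), so one must \emph{not} argue via ``invertible $\Rightarrow$ $k$-th power''. Working throughout with the distinct-eigenvalue (equivalently diagonalisable) criterion keeps the argument valid over every algebraically closed field. In the special subcase where $B$ is also scalar the map is $\lambda x^{k_1}+\mu y^{k_2}$, whose surjectivity already follows from \cite{panja2023surjectivity}.
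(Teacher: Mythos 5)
Your proof is correct, and it takes a genuinely different route from the paper's. The paper proves \cref{lem:power-A-scalar} by enumerating the conjugacy representatives of $B$ (scalar, diagonal with distinct entries, Jordan block), splitting each case further according to which entries of the target $C$ vanish, and exhibiting explicit witnesses $x,y$ every time; its reduction to singular targets rests on the claim that $J_A^{-1}C$ or $\widetilde{B}^{-1}C$ is always a $k$-th power when invertible. You replace all of this with one Zariski-density argument: $\{W:\Delta(W)\neq0\}$ and $\{W:\Delta(C-BW)\neq0\}$ are nonempty open subsets of the irreducible space $\mathbb{A}^4$, hence meet, and any common point $W$ yields $Y$ with $Y^{k_2}=W$ and $X$ with $X^{k_1}=\lambda^{-1}(C-BW)$. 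The only dichotomy you need is $\operatorname{rank}B=2$ versus $\operatorname{rank}B=1$, and your rank-one computation $\Delta(C-BW)=\bigl((c_{11}-r)-c_{22}\bigr)^2+4(c_{12}-s)c_{21}$, with leading coefficient $1$ in $r$, is indeed a nonzero polynomial in every characteristic. Your route buys two things: it is far shorter, and it is more robust in characteristic $p$ --- the paper's reduction tacitly uses that every invertible matrix is a $k$-th power, which fails for a non-semisimple invertible matrix when $p\mid k$ (the paper's own identity $\left(\begin{smallmatrix}\lambda&1\\&\lambda\end{smallmatrix}\right)^n=\left(\begin{smallmatrix}\lambda^n&n\lambda^{n-1}\\&\lambda^n\end{smallmatrix}\right)$ shows the off-diagonal entry vanishes when $p\mid n$), whereas your distinct-eigenvalue criterion sidesteps this entirely. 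What the paper's explicit case analysis buys in exchange is reusability: the same bookkeeping produces the image descriptions in \cref{tableIV} for the non-surjective configurations of \cref{lem:power-A-diagonal} and \cref{lem:power-A-unipotent}, where a pure density argument certifies surjectivity but says nothing about the image when the map fails to be onto.
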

\subsection{Proof of \cref{lem:power-A-scalar}}
   We complete the proof in several subcases. They depend on the representative 
   of the pair $(A, B)$ under simultaneous conjugation. We refer to \cref{sec:reduction} for the representatives under simultaneous conjugation.
    The equation we have in hand for $\lambda\neq 0$, is  
$$ C=\begin{pmatrix}
    a & b\\
    c & d
\end{pmatrix}=\begin{pmatrix}
    \lambda & 0\\
    0 & \lambda
\end{pmatrix}x^{k_1}+\widetilde{B}y^{k_2}$$
Considering the representative of each orbit space corresponding to $J_A$ we have the following cases:
\subsubsection{For $\mu_1 \neq 0$, $\widetilde{B}=\begin{pmatrix}\mu_1 & 0\\
        0 & \mu_1
    \end{pmatrix}$.} Then we have  $\begin{pmatrix}
        a & b\\
        c & d
    \end{pmatrix}=\lambda x^{k_1}+\mu_1 y^{k_2}$ for which the given map is surjective by section $5$ of \cite{panja2023surjectivity}.
    \subsubsection{For $\mu_1\neq \mu_2$, $\widetilde{B}=\begin{pmatrix}
        \mu_1 & 0\\
        0 & \mu_2
    \end{pmatrix}$.} There are three possibilities for $\widetilde{B}$ in hand :
    $$\begin{pmatrix}
        \mu_1 & 0\\
        0 & \mu_2
    \end{pmatrix}_{\mu_1\mu_2\neq 0} \text{or} \begin{pmatrix}
        \mu_1 & 0\\
        0 & 0
    \end{pmatrix}_{\mu_1\neq 0} \text{or} \begin{pmatrix}
        0 & 0\\
        0 & \mu_2
    \end{pmatrix}_{\mu_2\neq 0}$$
   It is adequate to consider $C$ to be singular as $A$ is non-singular.

\paragraph{} For $\mu_1\mu_2\neq 0$, 
let $x=\begin{pmatrix}
            a_0 & 0 \\
            0 & a_1
\end{pmatrix}$ where $a_0$ and $a_1$ are elements in $K$ to be fixed later. Then $C-\lambda x^{k_1}=\begin{pmatrix}
a-\lambda a_0^{k_1} & b\\
c & d-\lambda a_1^{k_1}            
\end{pmatrix}$. If $a$ or $d$ are non-zero then $a_0$ and $a_1$ can 
be chosen such that $\det\left(C-\lambda x^{k_1}\right)$ is non-zero 
and hence $\widetilde{B}^{-1}\left(C-\lambda x^{k_1}\right)$ can be 
written as $Y^{k_2}$ for some matrix $Y$ in $\M(2,K)$. Thus if $a$ 
and $d$ both are zero then $C$ being singular implies $b$ or $c$ is 
zero. For $b=0$ or $c=0$, consider $x=I$ where $I$ is an identity 
matrix then $C-\lambda I$ is non-singular and hence there exist $Y$ 
such that $\widetilde{B}^{-1}\left(C-\lambda I\right)$ is $Y^{k_2}$. 
\paragraph{} For $\mu_1\neq 0$ and $\mu_2=0$. If $d\neq 0$, let $y=\begin{pmatrix}
    b_0 & 0\\
    0 & 0
\end{pmatrix}$ and if $d=0$ with $c\neq 0$, consider $y= \begin{pmatrix}
    0 & b_0\\
    0 & 1
\end{pmatrix}$. Then $C-\widetilde{B}y^{k_2}$ is either $\begin{pmatrix}
    a-\mu_1 b_0^{k_2} & b\\
    c & d
\end{pmatrix}$ or $\begin{pmatrix}
    a & b-\mu_2 b_0\\
    c & d
\end{pmatrix}$. In each case $b_0$ can be chosen such tha$C-\widetilde{B}y^{k_2}$ is non-singular and hence can be written a$\lambda X^{k_1}$ for some matrix $X$ in $\M(2,K)$. For $d$ and $c$ both being zero, choose $x=\begin{pmatrix}
    a_0 & 0\\
    0 & 0
\end{pmatrix}$ and $y= \begin{pmatrix}
    0 & b_0\\
    0 & 1
\end{pmatrix}$. Then $J_Ax^{k_1}+\widetilde{B}y^{k_2}=\begin{pmatrix}
    \lambda a_0^{k_1} & \mu_1 b_0\\
    0 & 0
\end{pmatrix}$ where $a_0$ and $b_0$ can be chosen such that $a=\lambda_0^{k_1}$ and $b=\mu_1 b_0$ and hence giving the solution. 
A similar argument for the case $\mu_1=0$, $\mu_2\neq =0$, proves that the map is {\color{red} surjective} in this case as well.
\subsubsection{For $\mu_1 \in k$, $\widetilde{B}=\begin{pmatrix}
           \mu_1 & 1\\
           0 & \mu_1
       \end{pmatrix}$.} There are two instances either $\mu_1=0$ or $\mu_1\neq 0$.
       
        For $\mu_1 =0$, let $y= \begin{pmatrix}
               1 & 0\\
               b_0 & 0
           \end{pmatrix}$ if $d\neq 0$ and $y=\begin{pmatrix}
               0 & 0\\
               0 & b_0
           \end{pmatrix}$ for $d=0$ but $c\neq 0$. Then in each case $C-\widetilde{B}y^{k_2}$ is non-singular and hence is in image of $\lambda x^{k_1}$. If $c$ and $d$ both are zero then take $x=\begin{pmatrix}
               a_0 & a_1\\
               0 & 0
           \end{pmatrix}$ and $y=0$ for $a\neq 0$ otherwise take $x=0$ and $y=\begin{pmatrix}
               0 & 0 \\
               0 & b_0
           \end{pmatrix}$. If $a\neq 0$, then $C=\lambda x^{k_1}+\widetilde{B}0$ gives two equations $a=\lambda a_0^{k_1}$ and $b=\lambda a_1 a_0^{k-1}$. Since $K$ is algebraically closed, $a_0$ and $a_1$ can be chosen so as to have the solution of equations. Suppose $a=0$, then $\lambda x^{k_1}+\widetilde{B}y^{k_2}=\begin{pmatrix}
               0 & b_0^{k_2}\\
               0 & 0
           \end{pmatrix}$. Choose $b_0$ such that $b=b_0^{k_2}$ and hence $C$ is in the image proving that the map is {\color{red} surjective}. 
       {For $\mu_1\neq 0$} The proof follows from case $2$ part $a$.
       This completes the case when the Jordan form of $A$ is given by a scalar matrix. \qed

\begin{proposition}\label{lem:power-A-diagonal}
    Let $\omega=Ax^{k_1}+ By^{k_2}\in\M(2,K)\langle x, y \rangle$, with $A$, $B$ nonzero 
    matrices. If up to conjugation $A$ is a diagonal matrix $\diag(\lambda, 
    \mu)$, then the map $\widetilde{\omega}$ is surjective if and only if exactly one of the following happens
    \begin{enumerate}
        \item $A$ is invertible
        \item $\lambda=0$ and the second row of an orbit representative of $B$ is nonzero,
        \item $\mu=0$ and the first row of an orbit representative of $B$ is nonzero.
    \end{enumerate}
\end{proposition}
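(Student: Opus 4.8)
The plan is to combine the orbit reduction of \cref{sec:reduction} with a precise description of the set of $k$-th powers in $\M(2,K)$. After conjugating we may take $A=J_A=\diag(\lambda,\mu)$ and replace $B$ by an orbit representative $\widetilde B$ under the diagonal torus $C_{\GL_2}(J_A)$; the scalar subcase $\lambda=\mu$ is already handled by \cref{lem:power-A-scalar}, so the $\widetilde B$ are the representatives listed in \cref{sec:reduction} for $C_{\GL_2}=\{\diag(d_1,d_2)\}$. Since $X$ and $Y$ vary independently, the image is the Minkowski sum $S_1+S_2$ with $S_1=\{J_AX^{k_1}:X\in\M(2,K)\}$ and $S_2=\{\widetilde BY^{k_2}:Y\in\M(2,K)\}$. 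Two facts I would record first (for $k\ge2$; the case $k=1$ is trivial): over an algebraically closed field every invertible matrix is a $k$-th power, so $\{X^{k}\}\supseteq\GL(2,K)$ is Zariski dense; and a \emph{singular} matrix in $\M(2,K)$ is a $k$-th power if and only if it is $0$ or has nonzero trace, since such a matrix is conjugate to $\diag(0,t)=\diag(0,t^{1/k})^{k}$, while a nonzero nilpotent has only nilpotent roots.

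\medskip
\emph{The invertible case (1).} Here $J_A$ is invertible, so $S_1=J_A\{X^{k_1}\}\supseteq J_A\,\GL(2,K)=\GL(2,K)$, and every invertible $C$ lies in the image via $Y=0$. For singular $C$ I would pass to $C'=J_A^{-1}C$ and $B'=J_A^{-1}\widetilde B\neq0$, reducing to $X^{k_1}+B'Y^{k_2}$, and seek a $k_2$-th power $W$ making $C'-B'W$ a $k_1$-th power; then $X^{k_1}=C'-B'W$, $Y^{k_2}=W$ solve the equation. If $\det(C'-B'W)$ is not identically zero in $W$, density of $\{Y^{k_2}\}$ furnishes an invertible value, which is a $k_1$-th power. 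Otherwise $C'-B'W$ is singular for every $W$; since $B'\neq0$, the functional $W\mapsto\operatorname{tr}(B'W)$ is nonzero, so $W$ can be chosen within the dense set of $k_2$-th powers making $\operatorname{tr}(C'-B'W)\neq0$, whence $C'-B'W$ is rank-one semisimple, again a $k_1$-th power. Thus every $C$ is hit and $\widetilde\omega$ is surjective.

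\medskip
\emph{The rank-one case (2)--(3).} Now exactly one diagonal entry of $A$ vanishes; say it is the $i$-th. Left multiplication by $J_A$ annihilates the $i$-th row, so every matrix in $S_1$ has zero $i$-th row, while its other row is a nonzero scalar times the corresponding row of $X^{k_1}$, which ranges over all of $K^2$ (an invertible $k_1$-th power realizes any nonzero row, and the idempotent $\diag(1,0)$ or $\diag(0,1)$ realizes the zero row). Hence $S_1$ is exactly the two-dimensional subspace of matrices with zero $i$-th row, and $S_1+S_2=\M(2,K)$ if and only if the $i$-th rows of the elements of $S_2$ fill $K^2$. As the $i$-th row of $\widetilde BY^{k_2}$ equals the $i$-th row of $\widetilde B$ multiplied by $Y^{k_2}$, and $\{Y^{k_2}\}$ contains all invertibles together with $0$, these rows fill $K^2$ precisely when the $i$-th row of $\widetilde B$ is nonzero. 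This gives surjectivity exactly when the row of $\widetilde B$ indexed by the vanishing diagonal entry is nonzero; if that row is zero, every output has zero $i$-th row and the image equals the proper subspace $S_1$. This proves the stated equivalence and simultaneously shows the image is always a vector space (matching \cref{tableIV}).

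\medskip
I expect the main obstacle to be the degenerate singular targets in the invertible case, where no perturbation reaches an invertible matrix; it is exactly the sharp description of singular $k$-th powers (nonzero trace versus nilpotent) that resolves them. A secondary point to verify carefully is the bookkeeping of \emph{which} row of $\widetilde B$ is the relevant one for each orbit representative of \cref{sec:reduction}, in order to line the conclusions up with \cref{tableIV}.
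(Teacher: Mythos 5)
Your proposal is correct in substance but takes a genuinely different route from the paper. The paper proves this proposition by an exhaustive case analysis: for each orbit representative $\widetilde B$ from \cref{sec:reduction} it splits into subcases according to which of $J_A$, $\widetilde B$, $C$ are singular and which entries of $C$ vanish, and in each subcase exhibits explicit matrices $x,y$. You instead isolate two structural facts --- the image is the Minkowski sum $S_1+S_2$, and a singular matrix in $\M(2,K)$ is a $k$-th power ($k\ge 2$) if and only if it is zero or has nonzero trace --- and run a uniform argument: for invertible $A$, the dichotomy on whether $\det(C'-B'W)$ vanishes identically in $W$, combined with Zariski density of $k$-th powers and the nonvanishing of the linear functional $W\mapsto \mathrm{tr}(B'W)$, disposes of every $\widetilde B$ at once; for singular $A$, identifying $S_1$ with the subspace of matrices with zero $i$-th row reduces surjectivity to whether the $i$-th row of $\widetilde B$ is nonzero. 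Your version is much shorter, explains \emph{why} the answer depends only on the pattern of zero rows, and gives the image ($S_1$, a subspace) in the non-surjective case for free; the paper's computation, in exchange, produces the explicit witnesses recorded in \cref{tableIV}.

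Two caveats. First, like the paper (which asserts that $J_A^{-1}C$ is always a $k_1$-th power when invertible, and that $\begin{pmatrix}\lambda&1\\&\lambda\end{pmatrix}^n$ is conjugate to $\begin{pmatrix}\lambda^n&1\\&\lambda^n\end{pmatrix}$ for $\lambda\neq 0$), you assert that every invertible matrix over an algebraically closed field is a $k$-th power; this fails for non-semisimple invertible matrices when $\mathrm{char}(K)$ divides $k$. In your framework the repair is routine --- impose the additional nonempty open condition that $C'-B'W$ (respectively the matrix with prescribed row) have distinct eigenvalues --- but it should be stated. Second, your conclusion, surjectivity exactly when the row of $\widetilde B$ indexed by the vanishing diagonal entry of $A$ is nonzero, agrees with \cref{thm:A}, with \cref{tableIV}, and with the paper's own summary sentence at the end of its treatment of the diagonal case, but not with the literal wording of items (2) and (3) of the proposition, whose row indices appear to be transposed; you have proved the intended (and correct) statement rather than the printed one.
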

\subsection{Proof of \cref{lem:power-A-diagonal}}
In the current section, we get a handle on the Jordan form being diagonal with distinct diagonal entries. The equation being dealt here for $\lambda\neq \mu$ is  given by $$C=\begin{pmatrix}
    a & b\\
    c & d
\end{pmatrix}=\begin{pmatrix}
    \lambda & 0\\
    0 & \mu
\end{pmatrix}x^{k_1}+\widetilde{B}y^{k_2}.$$
Considering the representative of each orbit space obtained by the action of the centralizer of the given Jordan form, we have the following cases:
\subsubsection{For $\mu_1\neq 0$, $\widetilde{B}= \begin{pmatrix}
        \mu_1 & 0\\
        0 & \mu_1
    \end{pmatrix}$} The given map is surjective which follows by Case $2$ of Section $3.1$. 
\subsubsection{For $\mu_1\neq \mu_2$, $\widetilde{B}=\begin{pmatrix}
        \mu_1 & 0\\
        0 & \mu_2
    \end{pmatrix}$} For $C$ being singular and both $J_A$ and $\widetilde{B}$ are non-singular, we choose $\zeta\in k^{\times}$ such that $\zeta$ is not a characteristic value of $C$. Let $x=\begin{pmatrix}
        \sqrt[k_1]{\lambda^{-1}\zeta} & 0\\
        0 & \sqrt[k_1]{\mu^{-1}\zeta}
    \end{pmatrix}$. Then $C-J_Ax^{k_1}=C-\zeta I$ is a non-singular matrix and hence $y$ can be chosen such that $C=J_Ax^{k_1}+\widetilde{B}y^{k_2}$. For $J_A$ being non-singular or $\widetilde{B}$ is non-singular, the proof is similar for both. Without loss of generality, let us assume $J_A$ is non-singular and $\widetilde{B}$ is singular having two choices either $\mu_1=0$ or $\mu_2=0$. For $\mu_1=0$, let $y=\begin{pmatrix}
        0 & 0\\
        0 & b_0
    \end{pmatrix}$. Choose $b_0\in k$ such that $C-\widetilde{B}y^{k_2}$ is non-singular with $a$ being non-zero and hence lying in image of $J_Ax^{k_1}$. If $a$ is zero then $C$ being singular gives us $bc=0$ and hence if $b=0$, let $y=\begin{pmatrix}
        0 & 0\\
        b_0 & 1
    \end{pmatrix}$ and $x=\begin{pmatrix}
        0 & 0\\
        0 & a_0
    \end{pmatrix}$ then $J_Ax^{k_1}+\widetilde{B}y^{k_2}=\begin{pmatrix}
        0 & 0\\
        \mu_2 b_0 & \mu a_0^{k_1}+\mu_2 
    \end{pmatrix}$. We can choose $a_0$ such that $d=\mu a_0^{k_1}+\mu_2$ has a solution over $K$ and similarly $b_0$ can be chosen such that $c=\mu_2 b_0$. Now if $c=0$, then let $y= \begin{pmatrix}
        0 & 0\\
        0 & b_0
    \end{pmatrix}$ and $x=\begin{pmatrix}
        0 & a_0\\
        0 & 1
    \end{pmatrix}$. Then $J_Ax^{k_1}+\widetilde{B}y^{k_2}=\begin{pmatrix}
        0 & \lambda a_0\\
        0 & \mu+\mu_2 b_0^{k_2}
    \end{pmatrix}$ and hence choosing $a_0$ and $b_0$ such that $b=\lambda a_0$ and $d=\mu+\mu_2 b_0^{k_2}$ have solutions over $K$. Similarly, if $\mu_2=0$, for $d$ being non-zero, choose $y=\begin{pmatrix}
        b_0 & 0\\
        0 & 0
    \end{pmatrix}$ such that $C-\widetilde{B}y^{k_2}$ given by the matrix $\begin{pmatrix}
        a-\mu_1 b_0^{k_2} & b\\
        c & d
    \end{pmatrix}$ is non-singular and hence is in the image of $J_Ax^{k_1}$. If $d$ is zero, then $bc=0$. If $d$ and $b$ both are zero then let $x=\begin{pmatrix}
        1 & 0\\
        a_0 & 0
    \end{pmatrix}$ and $y=\begin{pmatrix}
        b_0 & 0\\
        0 & 0
    \end{pmatrix}$ such that $J_Ax^{k_1}+\widetilde{B}y^{k_2}=\begin{pmatrix}
        \lambda +\mu_1 b_0^{k_2} & 0\\
        \mu a_0 & 0
    \end{pmatrix}$. Choose $a_0$ and $b_0$ such that $a=\lambda +\mu_1 b_0^{k_2}$ and $c=\mu a_0$. If $d$ and $c$ are zero then $x=\begin{pmatrix}
        a_0 & 0\\
        0 & 0 
    \end{pmatrix}$ and $y=\begin{pmatrix}
        1 & b_0\\
        0 & 0
    \end{pmatrix}$. Then $J_Ax^{k_1}+\widetilde{B}y^{k_2}$ is $\begin{pmatrix}
        \lambda a_0^{k_1}+\mu_1 & \mu_1 b_0\\
        0 & 0
    \end{pmatrix}$ and choosing $a_0$ and $b_0$ accordingly gives us the desired result. 
    
    The only case left is when $J_A$ and $\widetilde{B}$ both are singular. If $\lambda$ and $\mu_1$ are zero, then the given map is {\color{red}not surjective} as the matrix of the form $\begin{pmatrix}
        c_1 & c_2\\
        c_3 & c_4
    \end{pmatrix}$ with $c_1$ or $c_2$ being non-zero are not in the image. Similarly, if $\mu$ and $\mu_2$ are zero then the matrix $\begin{pmatrix}
        0 & 0\\
        c_1 & c_2
    \end{pmatrix}$ and its conjugates are not in the image for $c_1$ and $c_2$ both being not zero at the same time. If $\mu$ and $\mu_1$ is zero then for $a \neq 0$, let $x=\begin{pmatrix}
        a_0 & a_1\\
        0 & 0
    \end{pmatrix}$ otherwise consider $x= \begin{pmatrix}
        0 & a_0\\
        0 & 1
    \end{pmatrix}$ and if $d\neq 0$, consider $y=\begin{pmatrix}
        0 & 0\\
        b_1 & b_0
    \end{pmatrix}$ otherwise $y=\begin{pmatrix}
        1 & 0\\
        b_0 & 0
    \end{pmatrix}$. In each case, depending on $a$ and $d$ being zero or non-zero, $a_0,a_1,b_0,b_1$ can be chosen for the considered $C$, such that the map is surjective. The proof is similar for $\lambda$ and $\mu_2$ being zero.
    
\subsubsection{If $\widetilde{B}=\begin{pmatrix}
        \mu_1 & 1\\
        0 & \mu_2
    \end{pmatrix}$} Here we consider both the cases $\mu_1=\mu_2$ and $\mu_1\neq \mu_2$. If $C$ is singular with $J_A$ and $\widetilde{B}$ both being non-singular. Consider $x=\begin{pmatrix}
        a_0 & 0\\
        a_1 & 0
    \end{pmatrix}$. Then $C-J_Ax^{k_1}=\begin{pmatrix}
        a-\lambda_1a_0^{k_1} & b\\
        c-\lambda_2a_1a_0^{k_1} & d
    \end{pmatrix}$. If $b$ or $d$ is non-zero then $a_0$ and $a_1$ can be chosen such that $C-J_Ax^{k_1}$ is non-singular and $\widetilde{B}$ being non-singular gives us the solution. If $b$ and $d$ both are zero then for $a\neq \mu_2c$, let $y=\begin{pmatrix}
        0 & 0\\
        0 & b_1
    \end{pmatrix}$ such that $C-\widetilde{B}y^{k_2}=\begin{pmatrix}
        a & -b_1^{k_1}\\
        c & -\mu_2 b_1^{k_1}
    \end{pmatrix}$. Choosing $b_1$ non-zero and $J_A$ being non-singular gives us the desired result as $C-\widetilde{B}y^{k_2}$ is non-singular. If $a=\mu_2 c$, let $x=\begin{pmatrix}
        a_0 & 0\\
        0 & 0
    \end{pmatrix}$ and $y=\begin{pmatrix}
        1 & 0\\
        b_1 & 0
    \end{pmatrix}$. Since $c=\mu_2b_1$ and $a=\lambda_1a_0^{k_1}+\mu_1+b_1$ have solution over $K$. Thus $C=J_Ax^{k_1}+\widetilde{B}y^{k_2}$ have solution with $C$ being singular. 
    
    We consider the case when $J_A$ is non-singular and $\widetilde{B}$ is singular i.e. one or both of 
    $\mu_1$, $\mu_2$ is zero. 
    \paragraph{For $\mu_1=0$ and $\mu_2$ being non-zero.} If $c\neq \mu_2a$, then let 
    $y=\begin{pmatrix}
            0 & 0\\
            0 & b_1
        \end{pmatrix}$. Choosing $b_1$ non-zero such that $C-\widetilde{B}y^{k_2}$ is non-singular and 
        $J_A$ being non-singular gives us the desired outcome. If $c=\mu_2 a$ then $ad-ab\mu_2=0$ as 
        $\det(C)$ is zero. Let $y=\begin{pmatrix}
            1 & 0\\
            b_1 & 0
        \end{pmatrix}$. Fix $b_1$ such that $C-\widetilde{B}y^{k_2}=\begin{pmatrix}
            a-b_1 & b\\
            a\mu_2-\mu_2b_1 & d
        \end{pmatrix}$ is non-singular and $J_A$ being non-singular gives us the required result. The case for $\mu_1 \neq 0$ and $\mu_2 =0$ is similar and we can conclude that the map is {\color{red}surjective}.
\paragraph{For $\mu_1$ and $\mu_2$ both being zero.} If $c$ or $d$ is non zero then let $y=\begin{pmatrix}
            0 & 0\\
            b_1 & b_0
        \end{pmatrix}$. Then $C-\widetilde{B}y^{k_2}$ is non-singular for some $b_0$ and $b_1$ hence $J_A^{-1}\left(C-\widetilde{B}y^{k_2}\right)=x^{k_1}$ has the solution over $K$. If $c$ and $d$ both are zero then consider $x=\begin{pmatrix}
            a_0 & 0\\
            0 & 0
        \end{pmatrix}$ and $y=\begin{pmatrix}
            0 & 0\\
            0 & b_0
        \end{pmatrix}$. $J_Ax^{k_1}+\widetilde{B}y^{k_2}$ is given by $\begin{pmatrix}
            \lambda_1a_0^{k_1} & b_0^{k_2}\\
            0 & 0
        \end{pmatrix}$. For the reason that $a=\lambda_1a_0^{k_1}$ and $b=b_0^{k_2}$ have solution over $K$ gives the solution for $C=J_Ax^{k_1}+\widetilde{B}y^{k_2}$. \\
    Moving on to the case when $J_A$ is singular and $\widetilde{B}$ is non-singular i.e. either $\lambda_1=0$ or $\lambda_2=0$. 
\paragraph{For $\lambda_1=0$.} If $a$ or $b$ is non-zero then  let $x=\begin{pmatrix}
            0& 0\\
            a_0 & a_1
        \end{pmatrix}$. Choosing appropriate $a_0$ and $a_1$ gives us $C-J_Ax^{k_1}$ to be non-singular and hence $C$ is in the required image. Consider the case when $a$ and $b$ both are zero. If $d\neq 0$, let $x=\begin{pmatrix}
            0 & 0\\
            a_1 & a_0
        \end{pmatrix}$ and $y=0$. Then choose $a_0$ and $a_1$  such that the equations $c=\lambda_2a_1a_0^{k_1-1}$ and $d=a_0^{k_1}$ have solution over $K$ and hence giving matrix solution over $\M(2,K)$. If $d=0$, let $x=\begin{pmatrix}
            1 & 0\\
            a_0 & 0
        \end{pmatrix}$ and $y=0$. Again choosing $a_0$ such that $c=\lambda_2a_0^{k_1}$ have solution over $K$ gives us $C=J_Ax^{k_1}+\widetilde{B}y^{k_2}$.
        \paragraph{For $\lambda_2=0$.} If $c$ or $d$ is non-zero then choose $x=\begin{pmatrix}
            a_0 & a_1\\
            0 & 0
        \end{pmatrix}$. $C-J_Ax^{k_1}$ is given by $\begin{pmatrix}
            a-\lambda_1a_0^{k_1} & b-\lambda_1a_0^{k_1-1}a_1\\
            c & d
        \end{pmatrix}$. Pick $a_0$ and $a_1$ such that $C-J_Ax^{k_1}$ is non-singular and since $\widetilde{B}$ is non-singular, we have $C-J_Ax^{k_1}=\widetilde{B}y^{k_2}$ for some $y\in \M(2,K)$. If $c$ and $d$ both are zero, then let $x=\begin{pmatrix}
            a_0 & 0\\
            0 & 0
        \end{pmatrix}$ and $y=\begin{pmatrix}
            1 & b_0\\
            0 & 0
        \end{pmatrix}$ gives us the result with $a=\lambda_1a_0^{k_1}+\mu_1$ and $b=\mu_1b_0$.\\
        Now move onward to the case when $J_A$ and $\widetilde{B}$ both are singular. 
    \paragraph{If $\lambda_1\neq 0$ and $\lambda_2= 0$.} We have further cases depending on whether $\mu_1$ or $\mu_2$ is zero. The equation we have in hand is given by $$C=\begin{pmatrix}
                \lambda_1 & 0\\
                0 & 0
            \end{pmatrix}x^{k_1}+\begin{pmatrix}
                \mu_1 & 1\\
                0 & \mu_2
            \end{pmatrix}y^{k_2}.$$
            \subparagraph{If $\mu_1\neq 0$ and $\mu_2=0$.}  One can observe that the matrix $C$ with $c$ or $d$ being non-zero can't be written as $J_Ax^{k_1}+\widetilde{B}y^{k_2}$ as $J_A$ and $\widetilde{B}$ can only read the first row. Let $x=\begin{pmatrix}
                a_0 & 0\\
                0 & 0
            \end{pmatrix}$ and $y=\begin{pmatrix}
                0 & b_0\\
                0 & 1
            \end{pmatrix}$. Then $J_Ax^{k_1}+\widetilde{B}y^{k_2}$ is $\begin{pmatrix}
                \lambda_1 a_0^{k_1} & \mu_1b_0+1\\
                0 & 0
            \end{pmatrix}$. Clearly $a_0$ and $b_0$ can be chosen such that $a=\lambda_1 a_0^{k_1}$ and $b=\mu_1b_0+1$ have solution for $a$ and $b$, and hence matrices of the form $\begin{pmatrix}
                a & b\\
                0 & 0
            \end{pmatrix}$ are in the image. 
            \subparagraph{If $\mu_1$ and $\mu_2$ both are zero.} Again the map is not surjective as matrices of the form $\begin{pmatrix}
                a & b\\
                c & d
            \end{pmatrix}$ with either $c$ or $d$ being non-zero are not in the image by the same argument given above. Now, let $x=\begin{pmatrix}
                a_0 & 0\\
                0 & 0
            \end{pmatrix}$ and $y=\begin{pmatrix}
                0 & 0\\
                0 & b_0
            \end{pmatrix}$. Then $J_Ax^{k_1}+\widetilde{B}y^{k_2}$ is $\begin{pmatrix}
                \lambda_1 a_0^{k_1} & b_0^{k_2}\\
                0 & 0
            \end{pmatrix}$. Picking $a_0$ and $b_0$ such that $a=\lambda_1a_0^{k_1}$ and $b=b_0^{k_2}$ have solution. Thus $\begin{pmatrix}
                a & b\\
                0 & 0
            \end{pmatrix}$ is in the image.
            \subparagraph{If $\mu_1=0$ and $\mu_2\neq 0$.} If $a$ and $d$ both are non-zero then let $x=\begin{pmatrix}
                a_0 & a_1\\
                0 & 0
            \end{pmatrix}$ and $y=\begin{pmatrix}
                0 & 0\\
                b_1 & b_0
            \end{pmatrix}$. If $a=0$ and $d\neq 0$, let $x=\begin{pmatrix}
                0 & a_0\\
                0 & 1
            \end{pmatrix}$ and $y=\begin{pmatrix}
                0 & 0\\
                b_1 & b_0
            \end{pmatrix}$ and if $a\neq 0$ and $d=0$, let $x=\begin{pmatrix}
                a_0 & a_1\\
                0 & 0
            \end{pmatrix}$ and $y=\begin{pmatrix}
                0 & a_0\\
                0 & 1
            \end{pmatrix}$. Also if $a$ and $d$ both are zero, then let $x=\begin{pmatrix}
                0 & a_0\\
                0 & 1
            \end{pmatrix}$ and $y=\begin{pmatrix}
                0 & a_0\\
                0 & 1
            \end{pmatrix}$. In each case $a_0,a_1,b_0,b_1$ can be chosen such that $C=J_Ax^{k_1}+\widetilde{B}y^{k_2}$.
    
        \paragraph{If $\lambda_1= 0$ and $\lambda_2\neq 0$.} We consider the cases when either $\mu_1$ or $\mu_2$ or both are zero. The equation is $$C=\begin{pmatrix}
            0 & 0\\
            0 & \lambda_2
        \end{pmatrix}x^{k_!}+\begin{pmatrix}
            \mu_1 & 1\\
            0 & \mu_2
        \end{pmatrix}y^{k_2}.$$
        
\subparagraph{If $\mu_1\neq 0$ and $\mu_2=0$.} If $d$ is non-zero then choose $x=\begin{pmatrix}
                0 & 0\\
                a_1 & a_0
            \end{pmatrix}$. Then $a_0$ and $a_1$ can be chosen such that $C-J_Ax^{k_1}=\begin{pmatrix}
            a & b\\
            0 & 0
            \end{pmatrix}$. Additionally, if $a\neq 0$, let $y=\begin{pmatrix}
                b_0 & b_1\\
                0 & 0
            \end{pmatrix}$. Then $b_0$ and $b_1$ can be chosen such that $C-J_Ax^{k_1}=\widetilde{B}y^{k_2}$. For $a=0$, let $y=\begin{pmatrix}
                0 & b_0\\
                0 & 1
            \end{pmatrix}$ such that $\begin{pmatrix}
                0 & b\\
                0 & 0
            \end{pmatrix}=\begin{pmatrix}
                0 & \mu_1b_0+1\\
                0 & 0
            \end{pmatrix}$. Then $b_0=\mu_1^{-1}\left(b-1\right)$. If $d=0$, then let $x=\begin{pmatrix}
                1 & 0\\
                a_0 & 0
            \end{pmatrix}$. Then $a_0$ can be chosen such that $C-J_Ax^{k_1}$ is $\begin{pmatrix}
                a & b\\
                0 & 0
            \end{pmatrix}$ which can be written as $\widetilde{B}y^{k_2}$ for some $y$ as we did before. Hence the map is {\color{red} surjective}. The case for $\mu_1 =0$ and $\mu_2\neq 0$
            follows a similar argument and the map is {\color{red} surjective} as well.
{\color{black}\subparagraph{If $\mu_1$ and $\mu_2$ both are zero.} The given map is surjective. The proof is similar to the above part with $\mu_2$ being zero.}
        Here, we conclude that the given map is surjective if and only if the same row of $J_A$ and $\widetilde{B}$ considered, are not entirely zero.

  \subsubsection{In this part, we consider $\widetilde{B}$ given by $\begin{pmatrix}
        \mu_1 & 0\\
        1 & \mu_2
    \end{pmatrix}$ with $\mu_1=\mu_2$ or $\mu_1\neq \mu_2$ }. When $J_A$ and $\widetilde{B}$ both are non-singular and $C$ is singular. If $b$ or $d$ is non-zero then let $x=\begin{pmatrix}
        a_0 & 0\\
        a_1 & 0
    \end{pmatrix}$. Choose $a_0$ and $a_1$ such that $C-J_Ax^{k_1}$ is non-singular and $\widetilde{B}$ being non-singular gives us the solution. If $b$ and $d$ both are zero then let $x=\begin{pmatrix}
        a_0 & 0\\
        0 & 0
    \end{pmatrix}$ and $y=\begin{pmatrix}
        1 & 0\\
        b_0 & 0
    \end{pmatrix}$. Choose $a_0$ and $b_0$ satisfying $a=\lambda a_0^{k_1}+\mu_1$ and $c=1+\mu_2b_0$ so that $C=J_Ax^{k_1}+\widetilde{B}y^{k_2}$. Consider when $J_A$ is non-singular and $\widetilde{B}$ is singular. Here, we have either $\mu_1=0$ or $\mu_2=0$ or both of them are zero.
    \paragraph{If $\mu_1=0$ and $\mu_2$ is non-zero}. If $a$ or $b$ is non-zero, let $y=\begin{pmatrix}
        0 & 0\\
        b_0 & b_1
    \end{pmatrix}$. Fix $b_0$ and $b_1$ such that $C-\widetilde{B}y^{k_2}$ is non-singular and $J_A$ being non-singular gives us the desired result. Now, if $a$ and $b$ both are zero, then let $x=\begin{pmatrix}
        0 & 0\\
        0 & a_0
    \end{pmatrix}$ and $y=\begin{pmatrix}
        1 & 0\\
        b_0 & 0
    \end{pmatrix}$. Choose $a_0$ and $b_0$ satisfying the equations $c=1+\mu_2b_0$ and $d=\mu a_0^{k_1}$ and hence we have $C=J_Ax^{k_1}+\widetilde{B}y^{k_2}$.
    \paragraph{If $\mu_1$ is non-zero and $\mu_2=0$} For $b\neq \mu_1d$, let $y=\begin{pmatrix}
        1 & 0\\
        0 & 0
    \end{pmatrix}$. Then $C-\widetilde B y^{k_2}$ is $\begin{pmatrix}
        a-\mu_1 & b\\
        c-1 & d
    \end{pmatrix}$ which is non-singular and $J_A$ is also non-singular and hence giving the aimed solution. If $b=\mu_1d$ with $a\neq 0$, then $x=\begin{pmatrix}
        a_0 & 0\\
        a_1 & 0
    \end{pmatrix}$ with $y=\begin{pmatrix}
        0 & d\\
        0 & 1
    \end{pmatrix}$ gives us the desired result where $a_0$ and $a_1$ satisfies the equations $a=\lambda a_0^{k_1}$ and $c=\mu a_1a_0^{k_1-1}$. For $b=\mu_1d$ and $a\neq 0$, let $y=\begin{pmatrix}
        0 & d\\
        0 & 1
    \end{pmatrix}$. Then $C-\widetilde By^{k_2}$ is $\begin{pmatrix}
        a & 0\\
        c & 0
    \end{pmatrix}$ which is in the image of $J_Ax^{k_1}$ by taking $x=\begin{pmatrix}
        a_0 & 0\\
        a_1 & 0
    \end{pmatrix}$ such that $a_0$ and $a_1$ satisfies $a=\lambda a_0^{k_1}$ and $c=\mu a_1 a_0^{k_1-1}$. If $a=0$ with $b=\mu_1 d$, let $x=\begin{pmatrix}
        0 & 0\\
        a_1 & a_0
    \end{pmatrix}$ such that $C-J_Ax^{k_1}$ is $\begin{pmatrix}
        0 & \mu_1d\\
        c-\mu a_1a_0^{k_1-1} & d-\mu a_0^{k_1}
    \end{pmatrix}$. Choose $a_0 \neq 0$, such that $d-\mu a_0^{k_1}\neq 0$ and hence $a_1$ can be chosen such that $c=\mu a_1a_0^{k_1-1}$. Then $C-J_Ax^{k_1}$ becomes $\begin{pmatrix}
        0 & \mu_1d\\
        0 & d-\mu a_0^{k_1}
    \end{pmatrix}$. Let $y=\begin{pmatrix}
        0 & b_1\\
        0 & b_0
    \end{pmatrix}$. Again we can choose $b_0$ and $b_1$ such that $C-J_Ax^{k_1}$ is $\widetilde By^{k_2}$.
    \paragraph{If $\mu_1$ and $\mu_2$ both are zero}If $a$ or $b$ is non-zero, then let $y=\begin{pmatrix}
        b_0 & b_1\\
        0 & 0
    \end{pmatrix}$. Considering the $y$ mentioned, we can choose $b_0$ and $b_1$ such that $C-\widetilde B y^{k_2}$ is non-singular and hence lies in the image $J_Ax^{k_1}$. If $a$ and $b$ both are zero, then letting $x=\begin{pmatrix}
        0 & 0\\
        0 & a_0
    \end{pmatrix}$ and $y=\begin{pmatrix}
        b_0 & 0\\
        0 & 0
    \end{pmatrix}$ gives us $J_Ax^{k_1}+\widetilde B y^{k_2}=\begin{pmatrix}
        0 & 0\\
        b_0^{k_2} & \mu a_0^{k_1}
    \end{pmatrix}$ and choosing $a_0$ and $b_0$ such that $c=b_0^{k_2}$ and $d=\mu a_0^{k_1}$ gives us the result.\\
    We consider the case when $J_A$ is singular and $\widetilde B$ is non-singular. Here, we have $\lambda_1$ is zero or $\lambda_2$ is zero.
    \paragraph{For $\lambda_1=0$}This case is covered in $3.2.3.3$ as we only use the fact that $\widetilde B$ is non-singular.
    \paragraph{For $\lambda_2=0$}As did in $3.2.3.4$, if $c$ or $d$ is non-zero, we are done. Assume $c$ and $d$ both are zero. If $a\neq 0$, let $x=\begin{pmatrix}
        a_0 & a_1\\
        0 & 0
    \end{pmatrix}$ with $y=0$ and if $a=0$, then let $x=\begin{pmatrix}
        0 & a_0\\
        0 & 1
    \end{pmatrix}$ with $y=0$. In each case, choose $a_0$ such that $C=J_Ax^{k_1}+\widetilde B y^{k_2}$.
    Now we examine the case when $J_A$ and $\widetilde{B}$ both are singular.
    \paragraph{For $\lambda_1\neq 0$ and $\lambda_2=0$}The equation in hand is $$\begin{pmatrix}
        a & b\\
        c & d
    \end{pmatrix}=\begin{pmatrix}
        \lambda_1 & 0\\
      0 & 0
    \end{pmatrix}x^{k_1}+\begin{pmatrix}
        \mu_1 & 0\\
        1 & \mu_2
    \end{pmatrix}y^{k_2}.$$ Since $\widetilde B$ is also singular, so we further have $\mu_1$ or $\mu_2$ being zero.
    \subparagraph{For $\mu_1\neq 0$ and $\mu_2=0$}If $c\neq 0$, then let $y=\begin{pmatrix}
        b_0 & b_1\\
        0 & 0
    \end{pmatrix}$ such that $C-\widetilde By^{k_2}$ is $\begin{pmatrix}
        a-\mu_1b_0^{k_2} & b-\mu_1b_1b_0^{k_2-1}\\
        c-b_0^{k_2} & c-b_1b_0^{k_2-1}
    \end{pmatrix}$. Choose $b_0$ such that $c-b_0^{k_2}=0$ and hence choosing $b_1$ such that $c-b_1b_0^{k_2-1}=0$. Now if $a-\mu_1b_0^{k_2}\neq 0$, let $x=\begin{pmatrix}
        a_0 & a_1\\
        0 & 0
    \end{pmatrix}$ otherwise let $x=\begin{pmatrix}
        0 & a_0\\
        0 & 1
    \end{pmatrix}$ such that $\begin{pmatrix}
       a-\mu_1b_0^{k_2} &  b-\mu_1b_1b_0^{k_2-1}\\
       0 & 0
    \end{pmatrix}$ can be written as $J_Ax^{k_1}$ by choosing appropriate $a_0$ and $a_1$ in each case. If $c=0$, let $y=\begin{pmatrix}
        0 & d\\
        0 & 1
    \end{pmatrix}$ such that $C-\widetilde B y^{k_2}$ is $\begin{pmatrix}
        a & b-\mu_1 d\\
        0 & 0
    \end{pmatrix}$. The matrix we have in hand is again in the image of $J_Ax^{k_1}$ by letting $x$ as we did before for $c\neq 0$.
    \subparagraph{For $\mu_1=0$ and $\mu_2\neq 0$} If $c\neq 0$, let $y=\begin{pmatrix}
        b_0 & b_1\\
        0 & 0
    \end{pmatrix}$ otherwise let $y=\begin{pmatrix}
        0 & b_0\\
        0  & 1
    \end{pmatrix}$. In each case $b_0$ and $b_1$ can be chosen such that $C-\widetilde By^{k_2}$ is given by $\begin{pmatrix}
        a & b\\
        0 & 0
    \end{pmatrix}$ which lies in $J_Ax^{k_1}$ as done for $\mu_2=0$.
We note that the same proof works when both $\mu_1$ and $\mu_2$ both are zero.

\paragraph{For $\lambda_1=0$ and $\lambda_2\neq 0$}. the equation is given by $$\begin{pmatrix}
    a & b\\
    c & d
\end{pmatrix}=\begin{pmatrix}
    0 & 0\\
    0 & \lambda_2
\end{pmatrix}x^{k_1}+\begin{pmatrix}
    \mu_1 & 0\\
    1 & \mu_2
\end{pmatrix}y^{k_2}.$$
\subparagraph{If $\mu_1\neq 0$ and $\mu_2=0$}For $a\neq 0$, let $y=\begin{pmatrix}
    b_0 & b_1\\
    0 & 0
\end{pmatrix}$ and for $a=0$, let $y=\begin{pmatrix}
    0 & b_0\\
    0 & 1
\end{pmatrix}$. We get $C-\widetilde By^{k_2}$ as $\begin{pmatrix}
    a-\mu_1b_0^{k_2} & b-\mu_1b_1b_0^{k_2-1}\\
    c-b_0^{k_2} & d-b_1b_0^{k_2-1}
\end{pmatrix}$ or $\begin{pmatrix}
    0 & b-\mu_1b_0\\
    c & d-b_0
\end{pmatrix}$ in respective cases. For $a\neq 0$, choose $b_0$ such that $a-\mu_1b_0^{k_2}=0$ and hence choosing $b_1$ such that $b-\mu_1b_1b_0^{k_2-1}=0$. For $a=0$, choose $b_0$ such that $b-\mu_1b_0=0$. Thus, we get $\begin{pmatrix}
     0 & 0\\
     c-b_0^{k_2} & d-b_1b_0^{k_2-1}\end{pmatrix}$ or $\begin{pmatrix}
         0 & 0\\
         c & d-b_0
     \end{pmatrix}$. If $d-b_1b_0^{k_2-1}\neq$ for $a\neq 0$ and $d-b_0\neq 0$ for $a=0$, we let $x=\begin{pmatrix}
         0 & 0\\
         a_1 & a_0
     \end{pmatrix}$. Then choosing $a_0$ and $a_1$ in each case gives us the solution for matrices obtained to be in the image of $J_Ax^{k_1}$. If $d-b_1b_0^{k_2-1}=0$ for $a\neq 0$ and $d-b_0= 0$ for $a=0$, considering $x=\begin{pmatrix}
         1 & 0\\
         a_0 & 0
     \end{pmatrix}$ gives us the solution by choosing appropriate $a_0$ in each case. 
     \subparagraph{If $\mu_1=0$}In this case, $\mu_2\in k$. Since the first row of $J_A$ and $\widetilde B$ is entirely zero, so the given map is not surjective as the matrices of the form $\begin{pmatrix}
       a & b\\
       c & d
     \end{pmatrix}$ with $a$ or $b$ being non-zero are not in the image.
Hence the map is \textcolor{red}{not surjective} when $\lambda_1$ and $\mu_1$ both are zero. 
\subsubsection{Consider $\widetilde B$ given by $\begin{pmatrix}
    z_1  & z_2\\
    1 & z_3
\end{pmatrix}_{z_i\neq 0}$}
If $C$ is singular with $J_A$ and $\widetilde B$ both being non-singular, let $x=\begin{pmatrix}
    a_0 & a_1\\
    0  & 0
\end{pmatrix}$. Then $C-J_Ax^{k_1}=\begin{pmatrix}
    a-\lambda_1a_0^{k_1} & b-\lambda_1a_1a_0^{k_1-1}\\
    c & d
\end{pmatrix}$. The $\det \left(C-J_Ax^{k_1}\right)$ is $-\lambda_1a_0^{k_1}d+\lambda_1a_1a_0^{k_1-1}c.$ If $c$ or $d$ is non-zero, choose $a_0$ and $a_1$ such that $\det \left(C-J_Ax^{k_1}\right)$ is non-zero and hence $\widetilde B$ being non-singular gives us the solution. If $c$ and $d$ both are zero, we are left to show that the matrix of the form $U:=\begin{pmatrix}
    a & b\\
    0 & 0
\end{pmatrix}$ is in the image of $J_Ax^{k_1}+\widetilde By^{k_2}.$ Let $x=\begin{pmatrix}
    a_0 & a_1\\
    0 & a_0
\end{pmatrix}$. Then $U-J_Ax^{k_1}$ is given by $\begin{pmatrix}
    a-\lambda_1a_0^{k_1} & b-k_1a_0^{k_1-1}a_1\\
    0 & -\lambda_2a_0^{k_1}
\end{pmatrix}$. Choose $a_0\neq 0$ such that $a-\lambda_1a_0^{k_1}\neq 0$ and hence making $U-J_Ax^{k_1}$ non-singular. With $\widetilde B$ being non-singular, we have $C$ to be in the image of the polynomial. Now, move onto the case when $J_A$ is singular and $\widetilde B$ is non-singular i.e. for $J_A$, we have either $\lambda_1=0$ or $\lambda_2=0$.
\paragraph{For $\lambda_1\neq0$ and $\lambda_2=0$}Let $x=\begin{pmatrix}
    a_0 & a_1\\
    0  & 0
\end{pmatrix}$. As done above, we have the solution with chosen $x$ for $c$ or $d$ being non-zero. If $c$ and $d$ both are zero, then for $a\neq 0$, let $x=\begin{pmatrix}
    a_0 & a_1\\
    0 & 0
\end{pmatrix}$ and $y=0$. Choose $a_0$ satisfying $a=\lambda_1a_0^{k_1}$ and hence choosing $a_1$ such that $b=\lambda_1a_1a_0^{k_1-1}$. If $a=0$, then let $x=\begin{pmatrix}
    0 & a_0\\
    0 & 1
\end{pmatrix}$ and $y=0$ gives us the result by choosing $a_0$ satisfying $b=\lambda_1a_0$.
\paragraph{For $\lambda_1=0$ and $\lambda_2\neq 0$}. Let $x=\begin{pmatrix}
    0 & 0\\
    a_1 & a_0
\end{pmatrix}$ such that $C-J_Ax^{k_1}$ is given by $\begin{pmatrix}
    a & b\\
    c-\lambda_2a_1a_0^{k_1-1} & d-\lambda_2a_0^{k_1}
\end{pmatrix}$. If $a$ or $b$ is non-zero, choose $a_0$ and $a_1$ such that $C-J_Ax^{k_1}$ is non-singular and hence $\widetilde B$ being non-singular gives us the solution for some $y\in M(2,k).$ Now, suppose $a$ and $b$ both are zero with $d\neq 0$, then $a_0$ is chosen such that $d=\lambda_2a_0^{k_1}$ and $a_1$ is chosen such that $c=\lambda_2a_1a_0^{k_1-1}$. If $a,b$ and $d$ are zero, letting $x=\begin{pmatrix}
    1 & 0\\
    a_0 & 0
\end{pmatrix}$ with $y=0$ gives us the map to be {\color{red}surjective} where $a_0$ satisfies $c=\lambda_2a_0$.

Consider the case when $J_A$ is non-singular and $\widetilde B$ is singular i.e. $z_2=z_1z_3$.  Equation is given by $$\begin{pmatrix}
    a & b\\
    c& d
\end{pmatrix}=\begin{pmatrix}
    \lambda_1 & 0\\
    0 & \lambda_2
\end{pmatrix}x^{k_1}+\begin{pmatrix}
    z_1 & z_1z_3\\
    1 & z_3
\end{pmatrix}y^{k_2}.$$ 
If $c\neq 0$, let $y=\begin{pmatrix}
    b_0 & b_1\\
    0 & 0
\end{pmatrix}$. Then $C-\widetilde By^{k_2}$ is given by $\begin{pmatrix}
    a-z_1b_0^{k_2} & b-z_1b_1b_0^{k_1-1}\\
    c-b_0^{k_2}& d-b_1b_0^{k_2-1}
\end{pmatrix}$. Choose $b_0$ such that $c-b_0^{k_2}=0$ and hence choose $b_1$ such that $b-z_1b_1b_0^{k_1-1}=0$. Then we have a diagonal matrix which is in the image of $J_Ax^{k_1}$ where $x$ is $\begin{pmatrix}
    a_0 & 0\\
    0 & a_1
\end{pmatrix}$. $a_0$ satisfies $a-z_1b_0^{k_2}=\lambda_1a_0^{k_1}$ and $a_1$ satisfies $d-b_1b_0^{k_2-1}=\lambda_2a_1^{k_1}$.  If $c$ is zero, then $ad=0$ as $\det$ of $C$ is zero. Suppose $a=0$ and $d\neq 0$, let $ y=0 $ and $x=\begin{pmatrix}
    0 & a_1\\
    0  & a_0
\end{pmatrix}$ where $a_0$ satisfies $d=\lambda_2a_0^{k_1}$ and $a_1$ satisfies $b=\lambda_1a_1a_0^{k_1-1}$. Consider $a\neq 0$ and $d=0$, then letting $y=0$ and $x=\begin{pmatrix}
    a_0 & a_1\\
    0 & 0
\end{pmatrix}$ gives us the solution where $a=\lambda_1a_0^{k_1}$  and $b=\lambda_1a_1a_0^{k_1-1}$. If $a$ and $d$ both are zero, then let $x=\begin{pmatrix}
    0 & a_0\\
    0 & 1
\end{pmatrix}$ and let $y=\begin{pmatrix}
    0 & b_0\\
    0 & 1
\end{pmatrix}$. Then $J_Ax^{k_1}+\widetilde By^{k_2}$ is $\begin{pmatrix}
    0 & \lambda_1a_0+z_1b_0+z_1z_3\\
    0 & \lambda_2+b_0+z_3
\end{pmatrix}$. Choose $b_0=-\lambda_2-z_3$ and hence choose $a_0$ such that $b=\lambda_1a_0+z_1b_0+z_1z_3$ and hence we have the {\color{red}surjectivity} of the given map.

If $J_A$ and $\widetilde B$ both are singular then we have two possibilities, as discussed below:
\paragraph{The first case considers $\lambda_1\neq 0$ and $\lambda_2=0$ with $z_2=z_1z_3$} The equation is given by $$\begin{pmatrix}
    a & b\\
    c & d
\end{pmatrix}=\begin{pmatrix}
    \lambda_1 & 0\\
    0 & 0
\end{pmatrix}x^{k_1}+\begin{pmatrix}
    z_1 & z_1z_3\\
    1 & z_3
\end{pmatrix}y^{k_2}.$$ For $c\neq 0$, let $y=\begin{pmatrix}
    b_0 & b_1\\
    0 & 0
\end{pmatrix}$ such that $C-\widetilde By^{k_2}$ is $\begin{pmatrix}
    a-z_1b_0^{k_2} & b-z_1b_1b_0^{k_2-1}\\
    c-b_0^{k_2} & d-b_1b_0^{k_2}
\end{pmatrix}$. Choose $b_0$ such that $c-b_0^{k_2}=0$ and hence choosing $b_1$ such that $d-b_1b_0^{k_2}=0$. For $a-z_1b_0^{k_2}\neq 0$, let $x=\begin{pmatrix}
    a_0 & a_1\\
    0 & 0
\end{pmatrix}$  such that $a_0$ satisfies $a-z_1b_0^{k_2}=\lambda_1a_0^{k_1}$ and $a_1$ satisfy $b-z_1b_1b_0^{k_2-1}=\lambda_1a_1a_0^{k_1-1}$. For $a-z_1b_0^{k_2}=0$,  let $x=\begin{pmatrix}
    0 & a_0\\
    0 & 1
\end{pmatrix}$ such that $a_0$ is chosen such that it satisfies the equation $b-z_1b_1b_0^{k_2-1}=\lambda_1a_0$ and hence for $c\neq 0$, we have solution. We consider the case when $c$ is zero. Let $y=\begin{pmatrix}
    0 & 0\\
    0 & b_0
\end{pmatrix}$ such that $C-\widetilde By^{k_2}$ is given by $\begin{pmatrix}
    a & b-z_1z_3b_0^{k_2}\\
    0 & d-z_3b_0^{k_2}
\end{pmatrix}$. Again choose $b_0$ such that $d-z_3b_0^{k_2}=0$ and hence letting $x$ as we did for $c\neq 0$, gives us the map to be {\ color {red} surjective}.
\paragraph{ Second case consider $\lambda_1=0$ and $\lambda_2\neq$ with $z_2=z_1z_3$} The equation we are dealing here is $$\begin{pmatrix}
    a & b\\
    c & d
\end{pmatrix}=\begin{pmatrix}
    0 & 0\\
    0 & \lambda_2
\end{pmatrix}x^{k_1}+\begin{pmatrix}
    z_1 & z_1z_3\\
    1 & z_3
\end{pmatrix}.$$ We let $y=\begin{pmatrix}
    b_0 & b_1\\
    0 & 0
\end{pmatrix}$ for $a\neq 0$. Then $C-\widetilde By^{k_2}$ is $\begin{pmatrix}
    a-z_1b_0^{k_2} & b-z_1b_1b_0^{k_2-1}\\
    c-b_0^{k_2} & d-b_1b_0^{k_2}
\end{pmatrix}$. Choose $b_0$ such that $a-z_1b_0^{k_2}=0$ and hence choosing $b_1$ such that $b-z_1b_1b_0^{k_2-1}=0$. For $d-b_1b_0^{k_2}\neq 0$, let $x=\begin{pmatrix}
    0 & 0\\
    a_1 & a_0
\end{pmatrix}$  such that $a_0$ satisfies $d-b_1b_0^{k_2}=\lambda_2a_0^{k_1}$ and $a_1$ satisfy $c-b_0^{k_2}=\lambda_2a_1a_0^{k_1-1}$. For $d-b_1b_0^{k_2}=0$,  let $x=\begin{pmatrix}
    1 & 0\\
    a_0 & 0
\end{pmatrix}$ such that $a_0$ satisfies the equation $ c-b_0^{k_2}=\lambda_2a_0$ and hence for $c\neq 0$, we have solution. If $a$ is zero. Let $y=\begin{pmatrix}
    0 & b_0\\
    0 & 1
\end{pmatrix}$ such that $C-\widetilde By^{k_2}$ is given by $\begin{pmatrix}
    0 & b-z_1b_0-z_1z_3\\
    c & d-b_0-z_3
\end{pmatrix}$. Again choose $b_0$ such that $b-z_1b_0-z_1z_3=0$ and hence letting $x$ as we did for $a\neq 0$, gives us the solution.
{The map is \textcolor{red}{surjective} for $\widetilde B =\begin{pmatrix}
    z_1 & z_2\\
    1 & z_3
\end{pmatrix}_{z_i\neq 0}$.}
\subsubsection{We have $\widetilde B$ given by $\begin{pmatrix}
    0 & z_1\\
    1 & z_2
\end{pmatrix}_{z_2\neq 0}$}We start with the case when $C$ is singular with $J_A$ and $\widetilde B$ both being non-singular. The proof is similar to $C$ being singular with $J_A$ and $\widetilde B$ both being non-singular. Examine the case when $J_A$ is non-singular and $\widetilde B$ being singular. The equation is given by $$\begin{pmatrix}
    a & b\\
    c & d
\end{pmatrix}=\begin{pmatrix}
    \lambda_1 & 0\\
    0 & \lambda_2
\end{pmatrix}x^{k_1}+\begin{pmatrix}
    0 & 0\\
    1 & z_2
\end{pmatrix}y^{k_2}.$$ If $a\neq 0$, let $x=\begin{pmatrix}
    a_0 & a_1\\
    0 & 0
\end{pmatrix}$. Then $C-J_Ax^{k_1}$ is given by $\begin{pmatrix}
    a-\lambda_1a_0^{k_1} & b-\lambda_1a_1a_0^{k_1-1}\\
    c & d
 \end{pmatrix}$. Choose $a_0$ such that $a-\lambda_1a_0^{k_1}=0$ and hence choose $a_1$ such that $b-\lambda_1a_1a_0^{k_1-1}=0$. The aim is to show a matrix of the form $\begin{pmatrix}
    0 & 0\\
    c  & d
\end{pmatrix}$ are in the image of $\widetilde By^{k_2}$. For $c\neq 0$, let $y=\begin{pmatrix}
  b_0 & b_1\\
  0 & 0
\end{pmatrix}$. Then $\widetilde By^{k_2}$ is $\begin{pmatrix}
    0 & 0\\
    b_0^{k_2} & b_1b_0^{k_2-1}
\end{pmatrix}$. Then choose $b_0$ such that $c=b_0^{k_2}$ and $b_1$ such that $d=b_1b_0^{k_2-1}$ and hence $C$ is in the image. If $c=0$, then let $y=\begin{pmatrix}
    0 & b_0\\
    0 & 1
\end{pmatrix}$, where $b_0$ satisfies $d=b_0+z_2$. If $a$ is zero, let $x=\begin{pmatrix}
    0 & a_0\\
    0 & 1
\end{pmatrix}$. Then $C-J_Ax^{k_1}$ is $\begin{pmatrix}
    0 & b-\lambda_1a_0\\
    c & d-\lambda_2
\end{pmatrix}$. Let $a_0$ be such that $b-\lambda_1a_0=0$ and hence again we land on to show that matrix of the form $\begin{pmatrix}
    0 & 0\\
    c & d-\lambda_2
\end{pmatrix}$ is in the image of $\widetilde B y^{k_2}$, which we already did above. Thus the map is {\color{red}surjective}.

Now we deal with the case when $J_A$ is singular and $\widetilde B$ is non-singular i.e. either $\lambda_1=0$ or $\lambda_2=0$.
\paragraph{For $\lambda_1\neq 0$ and $\lambda_2=0$} If $c$ or $d$ is non-zero, then let $x=\begin{pmatrix}
    a_0 & a_1\\
    0 & 0
\end{pmatrix}$ such that by choosing $a_0$ and $a_1$, the matrix $C-J_Ax^{k_1}$ becomes non-singular and $\widetilde B$ being non-singular gives us the result. If $c$ and $d$ both are zero, then for $a\neq 0$, let $x=\begin{pmatrix}
    a_0 & a_1\\
    0 & 0
\end{pmatrix}$ such that $a=\lambda_1a_0^{k_1}$ and $b=\lambda_1a_1a_0^{k_1-1}$ and for $a=0$, let $x=\begin{pmatrix}
    0 & a_0\\
    0 & 1
\end{pmatrix}$ such that $b=\lambda_1a_0$ and hence $C$ is in the image. Hence the map is {\color{red}surjective}.
\paragraph{For $\lambda_1=0$ and $\lambda_2\neq 0$}If $a$ or $b$ is non-zero, let $x=\begin{pmatrix}
    0 & 0\\
    a_1 & a_0
\end{pmatrix}$. The $\det\left(C-J_Ax^{k_1}\right)$ is $-a\lambda_2a_0^{k_1}+b\lambda_2a_1a_0^{k_1-1}$ which can be made non-zero by choosing appropriate $a_0$ and $a_1$. Considering $C-J_Ax^{k_1}$ being non-singular and $\widetilde B$ also being non-singular gives us the solution. If $a$ and $b$ both are zero, then $\begin{pmatrix}
    0 & 0\\
    c & d
\end{pmatrix}$ is in the image by choosing $x=\begin{pmatrix}
    0 & 0\\
    a_1 & a_0
\end{pmatrix}$ for $d\neq 0$ and $\begin{pmatrix}
    1 & 0\\
    a_0 & 0
\end{pmatrix}$ for $d=0$. Choose $a_0$ and $a_1$ according to the equations and we get the {\color{red}surjectivity} of the map.

Consider the case when both $J_A$ and $\widetilde B$ are singular matrices. The singularity of both cases gives us $z_1=0$ with either $\lambda_1=0$ or $\lambda_2=0$.
\paragraph{For $\lambda_1=0$ and $\lambda_2\neq0$} The equation we have is $$\begin{pmatrix}
    a & b\\
    c & d
\end{pmatrix}=\begin{pmatrix}
    0 & 0\\
    0 & \lambda_2
\end{pmatrix}x^{k_1}+\begin{pmatrix}
    0 & 0\\
    1 & z_2
\end{pmatrix}y^{k_2}.$$ The given map is \textcolor{red}{not surjective} as the matrix of the form $\begin{pmatrix}
    a & b \\
    c &d
\end{pmatrix}$ with $a$ or $b$ being non-zero, are not in the image. 
\paragraph{For $\lambda_1\neq 0$ and $\lambda_2=0$} The equation is $$\begin{pmatrix}
    a & b\\
    c & d
\end{pmatrix}=\begin{pmatrix}
    \lambda_1 & 0\\
    0 & 0
\end{pmatrix}x^{k_1}+\begin{pmatrix}
    0 & 0\\
    1 & z_2
\end{pmatrix}y^{k_2}.$$ If $a\neq 0$, let $x=\begin{pmatrix}
    a_0 & a_1\\
    0 & 0
\end{pmatrix}$ otherwise, let $x=\begin{pmatrix}
    0 & a_0\\
    0 & 1
\end{pmatrix}$. For $a\neq 0$, choose $a_0$ such that $a=\lambda_1a_0^{k_1}$ and hence $a_1$ such that $b=\lambda_1a_1a_0^{k_1-1}$. For $a=0$, choose $a_0$ such that $b=\lambda_1a_0$. Now, it suffices to show $\begin{pmatrix}
    0 & 0\\
    c & d
\end{pmatrix}$ is in the image of $\widetilde By^{k_2}$, which we already did when only $\widetilde B$ is considered to be singular.
{Hence, the map is \textcolor{red}{surjective} if and only if $\lambda_1$ is not zero. }
\subsubsection{The representative that we now have is given by $\begin{pmatrix}
    z_1 & z_2\\
    1 & 0
\end{pmatrix}_{z_1\neq 0}$}The map is surjective for $C$ being singular with $J_A$ and $\widetilde B$ both being non-singular. The proof follows from the above case. Consider the case when $J_A$ is singular and $\widetilde B$ is singular i.e. $z_2=0$. If $b$ or $d$ is non-zero, then let $y=\begin{pmatrix}
    b_0 & 0\\
    b_1 & 0
\end{pmatrix}$. $b_0$ and $b_1$ can be chosen such that $C-\widetilde By^{k_2}$ is non-singular and $J_A$ being non-singular gives us the solution. If $b$ and $d$ both are zero, then let $x=\begin{pmatrix}
    a_0 & 0\\
    0 & 0
\end{pmatrix}$ and $y\begin{pmatrix}
    b_0 & 0 \\
    0 & 0
\end{pmatrix}$. Then $J_Ax^{k_1}+\widetilde By^{k_2}$ is given by $\begin{pmatrix}
    \lambda_1a_0^{k_1} +z_1b_0^{k_2} & 0\\
    b_0^{k_2} & 0
\end{pmatrix}$. Choose $b_0$ such that $c=b_0^{k_2}$ and hence choose $a_0$ such that $a=\lambda_1a_0^{k_1} +z_1b_0^{k_2}$. Thus, the map is {\color{red}surjective}. 

Consider the case when $J_A$ is singular and $\widetilde B$ is non-singular. For $J_A$ being singular, we have either $\lambda_1=0$ or $\lambda_2=0$.
\paragraph{For $\lambda_1=0$ and $\lambda_2\neq 0$} If $a$ or $b$ is non-zero, consider $x=\begin{pmatrix}
    0 & 0\\
    a_1 & a_0
\end{pmatrix}$. With the same argument as before, we choose $a_0$ and $a_1$ such that $\det\left(C-J_Ax^{k_1}\right)$ is non-zero and $\widetilde B$ being non-singular gives us the required $y$. If $a$ and $b$ both are zero, with the same $x$ chosen and $y=0$ gives us the solution for $C$ if $d\neq 0$ by choosing $a_0$ such that $d=\lambda_2a_0^{k_1}$ and $a_1$ such that $b=\lambda_2a_1a_0^{k_1-1}$. For $d=0$, let $x=\begin{pmatrix}
    1 &0\\
    a_0 & 0
\end{pmatrix}$ where $a_0$ satisfies $c=\lambda_2a_0$ and let $y=0$. Therefore, the given map is {\color{red}surjective}.
\paragraph{For $\lambda_1\neq 0$ and $\lambda_2=0$}For $c$ or $d$ being non-zero, let $x=\begin{pmatrix}
    a_0 & a_1\\
    0 & 0
\end{pmatrix}$. Choose $a_0$ and $a_1$ such that $C-J_Ax^{k_1}$ is non-singular and $\widetilde B$ being non-singular gives us the solution. If $c$ and $d$ both are zero, for $a\neq 0$, let $x=\begin{pmatrix}
    a_0 & a_1\\
    0 & 0
\end{pmatrix}$ with $y=0$, where $a_0$ satisfies $a=\lambda_1a_0^{k_1}$ and $a_1$ satisfy $b=\lambda_1a_1a_0^{k_1-1}$. For $a=0$, let $x=\begin{pmatrix}
    0 & a_0\\
    0 & 1
\end{pmatrix}$ where $b=\lambda_1a_0$ and $y=0$. Hence, the given map is {\color{red}surjective}.

Consider the case when both $J_A$ and $\widetilde B$ are singular. We have two cases :
\paragraph{$\lambda_1=0$ with $z_1=0$}the given map is \textcolor{red}{not surjective} as matrices having non-zero entries in the first row are not in the image.
\paragraph{$\lambda_2=0$ with $z_1=0$}If $a\neq 0$, let $x=\begin{pmatrix}
    a_0 & a_1\\
    0 & 0
\end{pmatrix}$ where $a_0$ satisfy $a=\lambda_1a_0^{k_1}$ and $a_1$ satisfies $b=\lambda_!a_1a_0^{k_1-1}$. Otherwise, let $x=\begin{pmatrix}
    0 & a_0\\
    0 & 1
\end{pmatrix}$ where $a_0$ satisfy $b=\lambda_1a_0$. Then $C-J_Ax^{k_1}$ is $\begin{pmatrix}
    0 & 0\\
    c & d
\end{pmatrix}$. If $c\neq 0$, let $y=\begin{pmatrix}
    b_0 & b_1\\
    0 & 0
\end{pmatrix}$ such that $c=b_0^{k_2}$ and $d=b_1b_0^{k_2-1}$. Otherwise, let $y=\begin{pmatrix}
    0 & 0\\
    0 & b_0
\end{pmatrix}$. Then choose $b_0$ such that $d=z_2b_0^{k_2}$.\\
The given map is \textcolor{red}{surjective if and only if either of $\lambda_1$ or $z_1$ is non-zero}.
\subsubsection{Consider $\widetilde B$ given by $\begin{pmatrix}
    0 & z_2\\
    1 & 0
\end{pmatrix}$ where $z_2$ is non-zero.}The equation we are dealing here is given by $$\begin{pmatrix}
    a & b\\
    c & d
\end{pmatrix}=\begin{pmatrix}
    \lambda_1 & 0\\
    0 & \lambda_2
\end{pmatrix}x^{k_1}+\begin{pmatrix}
    0 & z_2\\
    1 & 0
\end{pmatrix}y^{k_2}$$Here, $\widetilde B$ is non-singular as $z_2$ is non-zero. If $C$ is non-singular with $J_A$ and $\widetilde B$ both being non-singular, we are done. Consider the case when $J_A$ is singular. We have two cases:
\paragraph{$\lambda_1=0$ and $\lambda_2\neq 0$} For $b\neq 0$, let $y=\begin{pmatrix}
    0 & 0\\
    b_0 & b_1
\end{pmatrix}$. Then $C-\widetilde By^{k_2}$ is given by $\begin{pmatrix}
    a-z_2b_1b_0^{k_2-1} & b-z_2b_0^{k_2}\\
    c & d
\end{pmatrix}$. Choose $b_0$ such that $b-z_2b_0^{k_2}=0$ and $b_1$ such that $a-z_2b_1b_0^{k_2-1}=0$. If $b$ is zero, let $y=\begin{pmatrix}
    1 & 0\\
    b_0 & 0
\end{pmatrix}$. Then $C-\widetilde By^{k_2}$ is given by $\begin{pmatrix}
    a-z_2b_0 & 0\\
    c-1 & d
\end{pmatrix}$. Choose $b_0$ such that $a-z_2b_0 =0$. It suffices to show $\begin{pmatrix}
    0 & 0\\
    c & d
\end{pmatrix}$ is in the image of $J_Ax^{k_1}$, which we have already done in $3.2.7.4$.
\paragraph{$\lambda \neq 0$ and $\lambda_2=0$}If $c\neq 0$, let $y=\begin{pmatrix}
    b_0 & b_1\\
    0 & 0
\end{pmatrix}$. Otherwise, let $y=\begin{pmatrix}
    0 & b_0\\
    0 & 1
\end{pmatrix}$. In either case, we choose $b_0$ and $b_1$ such that $C-\widetilde By^{k_2}$ is given by matrices of the form $\begin{pmatrix}
    a & b\\
    0 & 0
\end{pmatrix}$. Letting $x=\begin{pmatrix}
    a_0 & a_1\\
    0  & 0
\end{pmatrix}$ for $a\neq 0$ and $x=\begin{pmatrix}
    0 & a_0\\
    0 & 1
\end{pmatrix}$ for $a=0$ gives us the desired result.
{The given map is \textcolor{red}{surjective.} \qed
\begin{proposition}\label{lem:power-A-unipotent}
    Let $\omega=Ax^{k_1}+ By^{k_2}\in\M(2,K)\langle x, y\rangle$, with $A$, $B$ nonzero matrices. If up to conjugation $A$ is a unipotent matrix $\begin{pmatrix}
        \lambda & 1 \\ & \lambda
    \end{pmatrix}$, then the map $\widetilde{\omega}$ is surjective if and only if one of the following cases occurs;
    \begin{enumerate}
        \item $A$ is invertible
        \item if $\lambda=0$, the second row of an orbit representative of $B$ 
        has a nonzero second row.
    \end{enumerate}
\end{proposition}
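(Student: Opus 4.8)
The plan is to follow the same reduction as in the proofs of \cref{lem:power-A-scalar} and \cref{lem:power-A-diagonal}. Using \cref{sec:reduction} I first conjugate so that $A$ becomes its Jordan block $J_A=\begin{pmatrix}\lambda & 1\\ 0 & \lambda\end{pmatrix}$ and $B$ becomes one of the orbit representatives $\widetilde{B}$ for the action of the centralizer $C_{\GL_2}(J_A)=\left\{\begin{pmatrix} a_1 & b_1\\ 0 & a_1\end{pmatrix}\middle| a_1\neq 0\right\}$ computed there, namely $\begin{pmatrix}\mu_1 & 0\\ z & \mu_1\end{pmatrix}$ and $\begin{pmatrix}\mu_1 & 0\\ z & \mu_2\end{pmatrix}$ with $z\neq 0$, the diagonal $\begin{pmatrix}\mu_1 & 0\\ 0 & \mu_2\end{pmatrix}$ with $\mu_1\neq\mu_2$, and $\begin{pmatrix}\mu_1 & z\\ 0 & \mu_1\end{pmatrix}$. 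It then suffices to decide, for a target $C=\begin{pmatrix}a&b\\c&d\end{pmatrix}$, when $C=J_AX^{k_1}+\widetilde{B}Y^{k_2}$ is solvable. The single structural fact driving the statement is that $J_A\begin{pmatrix}p&q\\r&s\end{pmatrix}=\begin{pmatrix}r&s\\0&0\end{pmatrix}$ when $\lambda=0$, so $J_AX^{k_1}$ has zero second row for every $X$. Consequently, if $\lambda=0$ and the second row of $\widetilde{B}$ vanishes (the representative $\begin{pmatrix}0&z\\0&0\end{pmatrix}$), then both summands have zero second row and the image lies in the proper subspace of first-row matrices; this yields the ``only if'' direction and matches the non-surjective entry of \cref{tableIV}.

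For the ``if'' direction I treat the two regimes separately. When $\lambda\neq 0$, i.e. $A$ is invertible, $J_A$ is non-singular, so for non-singular $C$ I take $Y=0$ and solve $C=J_AX^{k_1}$ by writing $J_A^{-1}C$ as a $k_1$-th power, using that every non-singular $2\times2$ matrix over an algebraically closed field is a $k_1$-th power (the Jordan-block computation recorded at the start of \cref{sec:proof-theorem-A}). For singular $C$ with $\widetilde{B}$ non-singular I perturb by a scalar $X=tI$: since $\det(C-t^{k_1}J_A)$ is a degree-two polynomial in $t^{k_1}$ with leading coefficient $\lambda^2\neq 0$, some $t$ makes $C-J_AX^{k_1}$ non-singular, whence $\widetilde{B}^{-1}(C-J_AX^{k_1})$ is a $k_2$-th power. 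The remaining subcase, $\lambda\neq 0$ with $\widetilde{B}$ singular (so $\widetilde{B}$ is $\begin{pmatrix}0&0\\z&0\end{pmatrix}$, $\begin{pmatrix}0&z\\0&0\end{pmatrix}$, or a singular diagonal/triangular representative), requires exhibiting $X,Y$ by hand: I parametrize them by rank-one or triangular matrices whose $k_i$-th powers are transparent, reducing the equation to a scalar system solvable because $K$ is algebraically closed.

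When $\lambda=0$ and the second row of $\widetilde{B}$ is non-zero, I exploit that $J_AX^{k_1}$ can realize every first-row matrix $\begin{pmatrix}*&*\\0&0\end{pmatrix}$: for instance $X=\begin{pmatrix}1&0\\a_0&0\end{pmatrix}$ gives $J_AX^{k_1}=\begin{pmatrix}a_0&0\\0&0\end{pmatrix}$, while $X=\begin{pmatrix}0&0\\a_1&a_0\end{pmatrix}$ gives first row $(a_1a_0^{k_1-1},a_0^{k_1})$. The plan is to first choose $Y$ so that the second row of $\widetilde{B}Y^{k_2}$ matches the second row $(c,d)$ of $C$ — possible precisely because the second row of $\widetilde{B}$ is non-zero — and then absorb the residual first-row discrepancy through $J_AX^{k_1}$. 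I would organize this according to whether $c$ and $d$ vanish, picking $Y$ of the appropriate triangular shape in each branch.

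The main obstacle is the doubly-singular bookkeeping: whenever $J_A$ and $\widetilde{B}$ are both singular (forced when $\lambda=0$, and also occurring for $\lambda\neq 0$ with singular $\widetilde{B}$), neither the ``invert and take a root'' shortcut nor the scalar-perturbation trick applies. One must instead split into finitely many cases according to which entries of $C$ vanish, guess $X$ and $Y$ among nilpotent, idempotent, or triangular matrices whose powers are computable in closed form, and verify that the resulting scalar equations admit $k_i$-th roots in $K$. Keeping this casework exhaustive — so that every singular $C$ is covered and the surjective/non-surjective dichotomy matches the second-row condition exactly — is the delicate part, even though each individual verification is routine over an algebraically closed field.
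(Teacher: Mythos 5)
Your plan follows the paper's proof essentially step for step: the same reduction to the orbit representatives of \cref{sec:reduction}, the same second-row obstruction for non-surjectivity when $\lambda=0$ and $\widetilde B$ has zero second row (the paper's final subcase, where both $J_A$ and $\widetilde B$ are singular), and the same perturb-to-invertibility device for $\lambda\neq 0$ with $\widetilde B$ non-singular (\cref{lem:uniptotent-B-nsing} uses $x=\begin{pmatrix}a_0&a_1\\0&a_0\end{pmatrix}$ where you use $tI$, with the same effect since the determinant is quadratic with leading coefficient $\lambda^2$). The portions you defer --- the doubly-singular casework and the ``match the second row of $C$ with $\widetilde BY^{k_2}$, then repair the first row with $J_AX^{k_1}$'' step --- are exactly the explicit verifications the paper writes out, and the methods you describe for them do go through; the only slip is that $\diag(\mu_1,0)$ is a second orbit representative with vanishing second row besides $\begin{pmatrix}0&z\\0&0\end{pmatrix}$, though your obstruction argument applies to it verbatim.
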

\subsection{Proof of \cref{lem:power-A-unipotent}}In the existing section, we are dealing with the upper triangular Jordan form with the same diagonals. Thus, the equation is given by $$C=\begin{pmatrix}
    \lambda_1 & 1\\
    0 & \lambda_1
\end{pmatrix}x^{k_1}+\widetilde By^{k_2}.$$ 
\begin{lemma}\label{lem:uniptotent-B-nsing}
    Let $\widetilde B$ be a non-singular matrix. For $J_A$ given by $\begin{pmatrix}
        \lambda_1 & 1\\
        0 & \lambda_1
    \end{pmatrix}$ with $\lambda_1\neq 0$, the map $\widetilde{\omega}$ is surjective.
\end{lemma}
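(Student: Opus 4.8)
The plan is to reduce everything to the foundational fact, recalled at the start of \cref{sec:proof-theorem-A}, that over the algebraically closed field $K$ every non-singular matrix in $\M(2,K)$ is a $k$-th power for any $k\geq 1$ (the semisimple case is immediate, and the unipotent case is handled by the observation that $\begin{pmatrix}\mu & 1\\ 0 & \mu\end{pmatrix}^{k}$ is conjugate to $\begin{pmatrix}\mu^{k} & 1\\ 0 & \mu^{k}\end{pmatrix}$ whenever $\mu\neq 0$). Since in the present statement both $J_A$ and $\widetilde{B}$ are non-singular, for an arbitrary target $C\in\M(2,K)$ it suffices to exhibit a single matrix $X$ for which $C-J_AX^{k_1}$ is non-singular: then $\widetilde{B}^{-1}\bigl(C-J_AX^{k_1}\bigr)$ is non-singular, hence equals $Y^{k_2}$ for some $Y\in\M(2,K)$, and $C=J_AX^{k_1}+\widetilde{B}Y^{k_2}$ lies in the image of $\widetilde{\omega}$.

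To produce such an $X$, I would restrict to scalar matrices $X=tI$, so that $J_AX^{k_1}=t^{k_1}J_A$. Writing $s=t^{k_1}$, I consider the one-variable polynomial $p(s)=\det(C-sJ_A)$. For $2\times 2$ matrices this is a quadratic in $s$ whose leading coefficient is $\det(J_A)\neq 0$; in particular $p$ is not identically zero and has at most two roots. As $K$ is algebraically closed it is infinite, so I can choose $s_0\in K$ with $p(s_0)\neq 0$, and moreover every element of $K$ is a $k_1$-th power, so I can pick $t_0\in K$ with $t_0^{k_1}=s_0$. Setting $X=t_0I$ makes $C-J_AX^{k_1}=C-s_0J_A$ non-singular, and the argument concludes as in the previous paragraph.

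Note that this reasoning never uses whether $C$ itself is singular, so it establishes surjectivity uniformly; in particular the non-singular-$C$ case (where one could instead simply take $Y=0$) is subsumed. I do not anticipate any genuine obstacle: this lemma is the easy base case underlying \cref{lem:power-A-unipotent}, and the only point that truly needs verifying is the non-degeneracy of $p(s)$, which is precisely where the hypothesis $\det(J_A)\neq 0$ (equivalently $\lambda_1\neq 0$) enters. The more delicate situations are those in which $\widetilde{B}$ (or $J_A$) is permitted to be singular, where a scalar perturbation no longer suffices and one must instead exploit the explicit shape of the orbit representatives recorded in \cref{sec:reduction}.
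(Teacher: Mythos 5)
Your proposal is correct and follows essentially the same route as the paper: the paper also picks $X$ commuting with $J_A$ (an upper-triangular Toeplitz matrix $\begin{pmatrix}a_0 & a_1\\ 0 & a_0\end{pmatrix}$, of which your scalar $t_0I$ is the $a_1=0$ specialization), observes that $\det\bigl(C-J_AX^{k_1}\bigr)$ is a polynomial with leading term $\lambda_1^2a_0^{2k_1}\neq 0$ so a non-vanishing choice exists, and then invokes non-singularity of $\widetilde{B}$ to write the remainder as $\widetilde{B}Y^{k_2}$. Your reliance on every invertible matrix being a $k_2$-th power is exactly the fact the paper recalls at the start of \cref{sec:proof-theorem-A}, so there is no gap relative to the paper's own argument.
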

\begin{proof}
    Let $x=\begin{pmatrix}
    a_0 & a_1\\
    0 & a_0
\end{pmatrix}$. Then the $\det(C-J_Ax^{k_1})$ is given by $$-\left(a+d\right)\lambda_1a_0^{k_1}+c\left(a_0+\lambda_1k_1a_1\right)a_0^{k_1-1}+\lambda_1^2a_0^{2k_1}.$$ Choose $a_0$ and $a_1$ such that $\det\left(C-J_Ax^{k_1}\right)\neq 0$. Hence, $\widetilde B$ being non-singular gives us the solution.  
\end{proof}
The action of the centralizer of Jordan form considered gives the representatives discussed below:
\subsubsection{For $\mu_1,\mu_2\in k^\times$ with $z\neq 0$, we have $\widetilde B$ given by $\begin{pmatrix}
    \mu_1  & 0\\
    z & \mu_2
\end{pmatrix}$}Here, we take into account both the cases $\mu_1\neq \mu_2$ and $\mu_1=\mu_2$. Clearly, $\widetilde B$ given is a non-singular matrix as $\mu_i\neq 0$. We examine the cases where $J_A$ is singular or non-singular. If $J_A$ is singular, the equation is given by $$\begin{pmatrix}
    a & b\\
    c & d
\end{pmatrix}=\begin{pmatrix}
    0 & 1\\
    0  & 0
\end{pmatrix}x^{k_1}+\begin{pmatrix}
    \mu_1 & 0\\
    z  & \mu_2
\end{pmatrix}y^{k_2}.$$ If $c$ or $d$ is non-zero, let $x=\begin{pmatrix}
    0 & 0\\
    a_1 & a_0
\end{pmatrix}$. By choosing appropriate $a_0$ and $a_1$, $C-J_Ax^{k_1}$ is a non-singular matrix and $\widetilde B$ being non-singular gives us the required $y$. If $c$ and $d$ both are zero, then choose the same $x$ considered above with $b\neq 0$. Then $C-J_Ax^{k_1}$ is $\begin{pmatrix}
    a-a_1a_0^{k_1-1} & b-a_0^{k_1}\\
    0 & 0
\end{pmatrix}$. Choose $a_0$ such that $b-a_0^{k_1}=0$ and hence choose $a_1$ such that $a-a_1a_0^{k_1-1}=0$ and let $y=0$. If $b$ is zero, then let $y=0$ and $x=\begin{pmatrix}
    1 & 0\\
   a & 0
\end{pmatrix}$. Hence, for $J_A$ being singular, the map is \textcolor{red}{surjective}. Consider the case where $J_A$ is non-singular. The map is \textcolor{red}{surjective} by \cref{lem:uniptotent-B-nsing}.
\subsubsection{For $\mu_1\neq \mu_2$, $\widetilde B$ is given by $\begin{pmatrix}
    \mu_1 & 0\\
    0 & \mu_2
\end{pmatrix}$} The equation being dealt in this case is given by $$\begin{pmatrix}
    a & b\\
    c & d
\end{pmatrix}=\begin{pmatrix}
    \lambda_1 & 1\\
    0  & \lambda_1
\end{pmatrix}x^{k_1}+\begin{pmatrix}
    \mu_1 & 0\\
    0 & \mu_2
\end{pmatrix}y^{k_2}.$$ The proof follows from $3.2.3$.
\subsubsection{For $\mu_1,z\in k$, the representative is given by $\begin{pmatrix}
    \mu_1 & z\\
    0 & \mu_1
\end{pmatrix}$}We start with $C$ being singular and $J_A$ and $\widetilde B$ being non-singular. The 
map is \textcolor{red}{surjective} by lemma $3.1$. If $J_A$ is singular and $\widetilde B$ is non-
singular, the proof is similar to $3.3.1$ as we only use the fact that $\widetilde B$ is non-singular. 
For $J_A$ being non-singular and $\widetilde B$ being singular, we let $y$ to be the $x$ considered in 
$3.3.1$ and use the fact that $J_A$ is non-singular. If $J_A$ and $\widetilde B$ both are singular, 
the map is \textcolor{red}{not surjective} as the matrices $C$, having non-zero $c$ or $d$ are not in the image. 
\section{Proof of theorem B}\label{sec:proof-theorem-B}
In this section we consider $\omega\in \M(2,k)\langle x, y \rangle$ to be $Axy-Byx$,
where $A,B$ are nonzero matrices.
This an analogue of the famous L\'{v}ov-Kaplansky conjecture, which has 
become point of research in recent times. The L\'{v}ov-Kaplansky conjecture  states that if $K$ is an infinite field, $\A=\M(n,K)$ is the algebra of $n\times n$ matrices over $K$, then the image of a multilinear polynomial $p$
with coefficient in $K$, must be one of the following four vector spaces: $\{0\}$, the space of scalar matrices, the space of traceless matrices, and $\A$. 
We consider the polynomial which is similar to a commutator but with 
matrix coefficients. It is known that in case the polynomial is $xy-yx$, then
the image of $\M(n,K)$ is set of all trace zero matrices, and in particular a 
vector space. In this section, 
we prove that the image of $Axy-Byx$ is a vector space.
When $A=B$, the image of this polynomial is given by $A\mathfrak{sl}(2,k)$ and hence it is a vector 
subspace. We determine the images of this polynomial when $A\neq B$.
Furthermore, if $A-B$ is invertible, then for a matrix 
$M\in\M(2,K)$, choosing $x=(A-B)^{-1}M$ and $Y$ to be the $2\times 2$ identity matrix, we get that
the image of this polynomial is the whole of $\M(2,K)$. Thus in the following discussion, we assume that 
$\det(A-B)=0$. As was mentioned in \cref{sec:reduction}, we use 
the simultaneous conjugacy classes and solve the problem 
on the reduced equation. By abuse of notation, we continue using $A$, $B$ in place of $J_A$ and $\widetilde{B}$ respectively.
\subsection{Suppose $A$ is a scalar matrix.} We will divide this into two different cases, depending on the orbit representatives.
\subsubsection{Take $A=\begin{pmatrix}
    \mu_1&\\&\mu_1
\end{pmatrix}$, and ${B}=\begin{pmatrix}
    \mu_1\\&\mu_2
\end{pmatrix}$.} Then 
\begin{align*}
    &\begin{pmatrix}
    \mu_1&\\&\mu_1
\end{pmatrix}\begin{pmatrix}
    a_1 &  \\
     a_3 & 1
\end{pmatrix}\begin{pmatrix}
     & 1\\
    b_3 & b_4
\end{pmatrix}
-
\begin{pmatrix}
    \mu_1\\&\mu_2
\end{pmatrix}\begin{pmatrix}
     & 1\\
    b_3 & b_4
\end{pmatrix}\begin{pmatrix}
    a_1 &    \\
       a_3 & 1
\end{pmatrix}\\
&=\begin{pmatrix}
    -\mu_1a_3 & \mu_1(a_1-1)\\
    \mu_1b_3-\mu_2a_1b_3-\mu_2a_3b_4 & \mu_1a_3+(\mu_1-\mu_2)b_4
\end{pmatrix}.
\end{align*}
Since $\mu_1-\mu_2\neq 0$ and $\mu_1\neq 0$, if we choose $a_3$, $a_1$, $b_4$ and $b_3$ in the given order, it will cover all matrices of $\M(2,K)$. In this case the map is surjective and hence a vector space.
\subsubsection{Take $A=\begin{pmatrix}
    \mu&\\&\mu
\end{pmatrix}$, and ${B}=\begin{pmatrix}
    \mu&1\\&\mu
\end{pmatrix}$.}Then 
\begin{align*}
    &\begin{pmatrix}
    \mu&\\&\mu
\end{pmatrix}\begin{pmatrix}
    a_1 & a_2 \\
      & a_4
\end{pmatrix}\begin{pmatrix}
    b_1 & b_2\\
    1 & 
\end{pmatrix}
-
\begin{pmatrix}
    \mu&1\\&\mu
\end{pmatrix}\begin{pmatrix}
    b_1 & b_2\\
    1 & 
\end{pmatrix}\begin{pmatrix}
    a_1 &  a_2  \\
        & a_4
\end{pmatrix}\\
&=\begin{pmatrix}
    \mu a_2-a_1 & \mu a_1b_2-\mu a_2b_1-a_2-\mu a_4b_2\\
    \mu a_4 -\mu a_1 & -\mu a_2
\end{pmatrix}.
\end{align*}
Since $\mu\neq 0$, choosing the elements in the order $a_2$, $a_1$, $a_4$ and $b_2$ or $b_1$ (when at 
least one of $a_1,$ $a_2$, $a_4$, $a_1-a_4$ is nonzero), we get all matrices but $\begin{pmatrix}
  0  & \alpha \\ &0
\end{pmatrix}$. But this matrix can be obtained by setting $X$ to be the identity matrix and $Y=\diag(\alpha,\alpha)$. In this case the map gives the image to be $M(2,K)$ and hence is a vector space.
\subsection{Now assume $A$ to be a diagonal matrix} We fix $A$ to be the matrix $\begin{pmatrix}
    \mu_1 & \\ & \mu_2
\end{pmatrix}$ with $\mu_1\neq \mu_2$. We will divide this case into several subcases. Just keep in mind that 
we are assuming $\det(A-B)=0$. A few subcases have been discussed previously. So we won't repeat them.
\subsubsection{Take ${B}=\begin{pmatrix}
    \mu_1 & 1 \\ & \mu_1
\end{pmatrix}$} In this case,
\begin{align*}
    &\begin{pmatrix}
    \mu_1&\\&\mu_2
\end{pmatrix}\begin{pmatrix}
    a_1 & a_2 \\
      & a_4
\end{pmatrix}\begin{pmatrix}
    b_1 & b_2\\
    1 & 
\end{pmatrix}
-
\begin{pmatrix}
    \mu_1&1\\&\mu_1
\end{pmatrix}\begin{pmatrix}
    b_1 & b_2\\
    1 & 
\end{pmatrix}\begin{pmatrix}
    a_1 &  a_2  \\
        & a_4
\end{pmatrix}\\
&=\begin{pmatrix}
    \mu_1a_2-a_1 & \mu_1a_1b_2-\mu_1a_2b_1-a_2-\mu_1a_4b_2\\
    \mu_2a_4-\mu_1a_1 & \mu_1a_2
\end{pmatrix}.
\end{align*}
If $\mu_1\mu_2\neq 0$, the above expression gives all the matrices of $\M(2,K)$ but the matrices of the form
$\begin{pmatrix}
 0 & \alpha \\ & 0   
\end{pmatrix}$. But this matrix can be obtained by plugging $X=\begin{pmatrix}
    0 & 1\\ 0& 0
\end{pmatrix}$, and $y=\begin{pmatrix}
    0 & 0 \\0 & \alpha/\mu_1
\end{pmatrix}$. Hence assume $\mu_1\mu_2=0$ hereafter, but not both being zero. In case $\mu_1=0$, the equation
\begin{align*}
    &\begin{pmatrix}
        0 & \\
         & \mu_2
    \end{pmatrix}
    \begin{pmatrix}
        a & b\\
        1 & 
    \end{pmatrix}
    \begin{pmatrix}
        e & f\\
        1 & 
    \end{pmatrix}
    -
    \begin{pmatrix}
       0  & 1\\
         & 0
    \end{pmatrix}
    \begin{pmatrix}
        e & f\\
        1 & 
    \end{pmatrix}
    \begin{pmatrix}
        a & b\\
        1 & 
    \end{pmatrix}
    =\begin{pmatrix}
        - a & -b\\
        \mu_2e & \mu_2f
    \end{pmatrix},
\end{align*}
proves that the map is surjective and hence the {image is a vector space}. Moving to the case
$\mu_2=0$, we have 
\begin{align*}
    &\begin{pmatrix}
        \mu_1 & \\
         & 0
    \end{pmatrix}
    \begin{pmatrix}
        0 & 1\\
        1 & 0
    \end{pmatrix}
    \begin{pmatrix}
        e & f\\
        g & h
    \end{pmatrix}
    -
    \begin{pmatrix}
       \mu_1  & 1\\
         & \mu_1
    \end{pmatrix}
    \begin{pmatrix}
        e & f\\
        g & h
    \end{pmatrix}
    \begin{pmatrix}
        0 & 1\\
        1 & 0
    \end{pmatrix}
    =\begin{pmatrix}
        \mu_1g-\mu_1f-h & \mu_1h-\mu_1e-g\\
        -\mu_1h & -\mu_1g
    \end{pmatrix},
\end{align*}
which shows that if we choose $g$, $h$, $f$ and $e$ one after another, we get all of $\M(2,K)$. This 
proves that the map is surjective. A little bit of modification of the equation also gives 
the surjectivity of the map when ${B}$ is of the form 
$\begin{pmatrix}
        \mu_1 & \\
        1 & \mu_1
    \end{pmatrix}$, $\begin{pmatrix}
        \mu_2 & 1\\
         & \mu_2
    \end{pmatrix}$, and $\begin{pmatrix}
        \mu_2 & \\
        1 & \mu_2
    \end{pmatrix},$. We move to the next case.
\subsubsection{Take ${B}=\begin{pmatrix}
    \mu_1 & 1\\
    0 & \mu_2
\end{pmatrix}$} First, let us assume that $\mu_1\mu_2\neq 0$. Observe that
\begin{align*}
    &\begin{pmatrix}
        \mu_1 & \\ & \mu_2
    \end{pmatrix}
    \begin{pmatrix}
        a & b \\ 1 & d
    \end{pmatrix}
    \begin{pmatrix}
        e & f \\ 0 & h
    \end{pmatrix}
    -
    \begin{pmatrix}
        \mu_1 & 1 \\ &\mu_2
    \end{pmatrix}
    \begin{pmatrix}
        e & f \\ 0 & h
    \end{pmatrix}
    \begin{pmatrix}
        a & b \\ 1 & d
    \end{pmatrix}
    \\=&\begin{pmatrix}
        -\mu_1f-h & \mu_1af+\mu_1bh-\mu_1be-\mu_1fd-dh\\
        \mu_2e-\mu_2h & \mu_2f
    \end{pmatrix}.
\end{align*}
Hence choosing $f$, $h$, $e$ in order, and other variables appropriately, attains all the matrices of 
$\M(2,K)\setminus\{U_{\alpha}:\alpha\neq 0\}$, where $U_{\alpha}=\begin{pmatrix}
        0 & \alpha \\& 0
    \end{pmatrix}$. Indeed if $e=f=h=0$ then the resultant matrix is zero. But, we can get the
matrices $U_\alpha$ by plugging $X=\begin{pmatrix}
    0 & \\
    & \alpha
\end{pmatrix}$, and $Y$ to be the $2\times 2 $ identity matrix. 

Now, suppose $\mu_2=0$. Then plugging $X=\begin{pmatrix}
    &\\ c & d
\end{pmatrix}$ and $Y$ to be the identity matrix, we get that the image consists of all matrices of the form 
$\begin{pmatrix}
    a & b \\ 0 & 0
\end{pmatrix}$, which is a vector space. The case $\mu_1=0$ and $\mu_2\neq 0$ is similar. Hence, in this
subcase, the image is a vector space. A similar argument proves the case $B=\begin{pmatrix}
    \mu_1 & \\ 1 & \mu_2
\end{pmatrix}$.
\subsubsection{Consider $B=\begin{pmatrix}
    & z_2 \\ 1 &
\end{pmatrix}$} Since $\det(A-B)=0$, we conclude that $z_2=\mu_1\mu_2$. Since 
$z_2\neq 0$, we get that $\mu_1\mu_2\neq 0$, forcing each of $\mu_i$'s to be nonzero. For $\mathrm{char}(k)\neq 2$, we divide this into two subcases, depending on the 
value of $\mu_1$. For $\mu_1\neq 1/2$, choosing 
\begin{align*}
    x=\begin{pmatrix}
    a & b\\ c& d
\end{pmatrix}\text{ and }y=\begin{pmatrix}
    \mu_1 &-\mu_1\mu_2\\
     & 1
\end{pmatrix},
\end{align*} we get the image to be $\M(2,K)$ which is a vector space. For the 
other case we get the full of $\M(2,K)$ by setting
\begin{align*}
    x=\begin{pmatrix}
        2 & -2\mu_2\\ &1
    \end{pmatrix}\text{ and }y=\begin{pmatrix}
        e & f \\ g& h
    \end{pmatrix}.
\end{align*}
The $\mathrm{char}(k)=2$ can be proved by a choice of same kind of matrices.
\subsubsection{Consider $B=\begin{pmatrix}
     & z_2 \\1 & z_3
\end{pmatrix}$, with $z_3\neq 0$} First we deal with the case $z_2=0$. Then
\begin{align*}
    &\begin{pmatrix}
        \mu_1 & \\ & \mu_2
    \end{pmatrix}
    \begin{pmatrix}
         & 1\\ 1 &  
    \end{pmatrix}
    \begin{pmatrix}
        e & f \\ g& h
    \end{pmatrix}- \begin{pmatrix}
         & 0\\ 1 & z_3
    \end{pmatrix}
    \begin{pmatrix}
        e & f \\ g& h
    \end{pmatrix}
    \begin{pmatrix}
        & 1\\
        1 &
    \end{pmatrix}\\
    =&\begin{pmatrix}
        \mu_1g & \mu_1h \\ \mu_2 e-z_2h  & \mu_2 f-z_2 g
    \end{pmatrix}.
\end{align*}
Since we have four linear equation in four variables and the coefficient matrix of
the equations have non-zero determinant, we get that the image is set of all matrices of the form 
$\begin{pmatrix}
    & \\ x & y
\end{pmatrix}$, if $\mu_1=0$, or the whole of $\M(2,K)$ otherwise. This proves that
the image is a vector space. For the other case choose $\lambda, \lambda'\neq 0$ such that $\mu_1^2\neq \lambda$ and $\mu_1^2\lambda\neq z_2^2\lambda'$ and 
$\mu_2^2\lambda'\neq \lambda$. Then choosing 
\begin{align*}
    X=\begin{pmatrix}
        & \lambda \\ \lambda'& 
    \end{pmatrix}, \text{ and }Y=\begin{pmatrix}
        e & f \\ g & h
    \end{pmatrix},
\end{align*}
we get the image of the polynomial to be vector space as it is $\M(2,K)$. A similar argument works for the case $B=\begin{pmatrix}
    z_1 & z_2 \\ 1 &
\end{pmatrix}$. 
\subsubsection{Lastly, assume $B=\begin{pmatrix}
    z_1 & z_2 \\ 1 & z_3
\end{pmatrix}$, with $z_i\neq 0$} Since $\det (A-B)= 0$, we get that $z_2=(\mu_1-z_1)(\mu_2-z_3)$. Furthermore, $z_2\neq 0$ implies that $\mu_1\neq z_1$
and $\mu_1\mu_2\neq z_2$. First we assume that $\mu_1\neq 0$. Then we have that
\begin{align*}
    &\begin{pmatrix}
        \mu_1 & \\ & \mu_2
    \end{pmatrix}
    \begin{pmatrix}
        a & b \\ c & d
    \end{pmatrix}
    \begin{pmatrix}
        1 & \\ & f
    \end{pmatrix}
    -\begin{pmatrix}
        z_1 & z_2 \\ 1 & z_3
    \end{pmatrix}
    \begin{pmatrix}
        1 & \\ & f
    \end{pmatrix}
    \begin{pmatrix}
        a & b \\ c& d
    \end{pmatrix}\\
    &=\begin{pmatrix}
        (\mu_1-z_1)a-z_2fc &  b(\mu_1f-z_1)-z_2fd\\
        (\mu_2-z_3f)c-a & d(\mu_2-z_3)f-b
    \end{pmatrix}.
\end{align*}
Note that the coefficient matrix for $a,b,c,d$ is 
\begin{align*}
    \begin{pmatrix}
        \mu_1-z_1 & & -z_2f & \\
        & \mu_1f-z_1 && -z_2f\\
        -1 & & \mu_2-z_3f & \\
        & -1 & & (\mu_2-z_3)f
    \end{pmatrix}.
\end{align*}
Then the determinant of the coefficient matrix is $-\mu_1\mu_2z_2f(f-1)^2$. 
Then choosing $f=2$, we get the image of the map to be whole of $\M(2,K)$ and 
hence it is
a vector space. We do the case $\mu_1=0$ and leave the other case for the reader as the trick is similar.
We have
\begin{align*}
    &\begin{pmatrix}
        0& \\ & \mu_2
    \end{pmatrix}
    \begin{pmatrix}
        a & b \\ c & d 
    \end{pmatrix}
    \begin{pmatrix}
        & 1\\  \lambda & 
    \end{pmatrix}
    -
    \begin{pmatrix}
        z_1 & z_2 \\ 1 & z_3
    \end{pmatrix}
    \begin{pmatrix}
        & 1\\ \lambda &
    \end{pmatrix}
    \begin{pmatrix}
        a & b \\ c & d
    \end{pmatrix}\\
    =&\begin{pmatrix}
        -z_2\lambda - z_1c & -z_2\lambda b - z_1d\\
        \mu_2 d\lambda -\lambda z_3a - c & \mu_2 c - \lambda z_3b-d
    \end{pmatrix}.
\end{align*}
Choose $\lambda$ such that $\lambda z_2^2(1-\lambda\mu_2^2)+\lambda^2z_1^2z_3^2\neq 0$, which 
is the coefficient matrix of the equations involving $a$, $b$, $c$, and $d$. 
This proves that in this case the image is surjective on $\M(2,K)$. The case when
$\mu_2=0$ and $\mu_1\neq 0$, is very similar to this case. This finishes all the subcases, proving it to be a vector space.

\subsection{Now we conclude the final case when $A$ is a unipotent matrix} We fix $A$ to be the matrix
$\begin{pmatrix}
    \mu & 1 \\
    & \mu
\end{pmatrix}$. As in all other cases, this will be divided into a few subcases. Also, we should remember our standing assumption that $\det(A-B)=0$. The previously covered cases will not be repeated.
\subsubsection{Take $B=\begin{pmatrix}
    \mu_1 & \\
    z & \mu_2
\end{pmatrix}$, with $z\neq 0$} Since $\det(A-B)=0$, we get that 
$\lambda\neq \mu_2$.
In this case
\begin{align*}
    &\begin{pmatrix}
        \lambda & 1 \\ & \lambda
    \end{pmatrix}
    \begin{pmatrix}
        a & b\\ c & d
    \end{pmatrix}
    \begin{pmatrix}
        1 & \\ 1 & h
    \end{pmatrix}
    -
    \begin{pmatrix}
        \mu_1 & \\ z & \mu_2 
    \end{pmatrix}
    \begin{pmatrix}
        1 & \\ 1 & h 
    \end{pmatrix}
    \begin{pmatrix}
        a & b \\ c & d
    \end{pmatrix}\\
    =&\begin{pmatrix}
        (\lambda-\mu_1)a +\lambda b + c + d & (\lambda h -\mu_1)b + d\\
        -(z+\mu_2)a +(\lambda-\mu_2h)c+\lambda d & -(z+\mu_2 )b+ 
        h(\lambda_1-\mu_2)d
    \end{pmatrix}.
\end{align*}
Since $\lambda$ and $\mu_2$ can not be simultaneously zero and the determinant of the coefficient matrix of $a,b,c,d$ is 
$h[(\lambda-\mu_2)\lambda(h-1)+\mu_2][(\lambda-\mu_1)\mu_2(1-h)+\mu_2g]$, we can choose $h$ such a way that
the determinant is nonzero. Hence we get the image is $\M(2,K)$ which is a vector space.
\subsubsection{Consider $B=\begin{pmatrix}
    \lambda & z \\ & \lambda
\end{pmatrix}$} Firstly assume that $z\neq 0$. In this case
\begin{align*}
&\begin{pmatrix}
    \lambda & 1 \\
    & \lambda
\end{pmatrix}
\begin{pmatrix}
   a & b \\ c & d
\end{pmatrix}
\begin{pmatrix}
     & f \\
    1 &  
\end{pmatrix}
-
\begin{pmatrix}
    \lambda & z\\
     & \lambda
\end{pmatrix}
\begin{pmatrix}
     & f \\
    1 & 
\end{pmatrix}
\begin{pmatrix}
   a & b \\ c & d
\end{pmatrix}\\
=&\begin{pmatrix}
    \lambda b + d - \lambda f c - za & \lambda a f + c f -\lambda f d - zb\\
    \lambda d -\lambda a & \lambda c f -\lambda b
\end{pmatrix}.
\end{align*}
Then $f$ can be chosen such a way that the coefficinet matrix of the system of equations in $q,b,c,d$ has non-zero determinant. This shows 
that the image in this case if $\M(2,K)$.

The case when $z=0$ is easier, and it can be shown that the image of this polynomial is given by the set of matrices of the form $\begin{pmatrix}
    a & b \\ &
\end{pmatrix}$, which is again a vector space. This finishes the proof of \cref{thm:B}.

\section{Concluding remarks}\label{sec:concl-rem}
In summary, we have proved that the generalized Waring-type problem has a positive solution
for $\M(2,K)$ under mild conditions on the matrices. Furthermore, the
image of $Axy-Byx$ for nonzero $A,B\in\M(2,K)$ is a vector space for all choices of 
$A$, $B$. It will be interesting to see images of other polynomials in general 
$\M(n,K)$. Generalizing \cref{thm:A} we propose the following problem.
\begin{problem}
    Let $K$ be an algebraically closed field. Let $A$ and $B$ be two nonzero 
    matrices in $\M(n,K)$. Then the map induced by $Ax^{k_1}+By^{k_2}$ with 
    $k_1,k_2\geq 2$, on $\M(n,K)$ is surjective if and only if 
    under simultaneous conjugation, there exists representative of the orbit space say $A'$ and $B'$ such that $A'$ and $B'$ have disjoint places 
    of zero rows.
\end{problem}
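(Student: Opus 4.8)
The plan is to treat the conjecture as the $\M(n,K)$ analogue of \cref{thm:A} and attack the two implications separately: necessity is routine and can be settled unconditionally, while sufficiency is the genuinely hard (for $n\geq 3$, open) direction, which is why the statement is phrased as a conjecture. The organising principle, exactly as in \cref{sec:reduction}, is that surjectivity is a simultaneous-conjugacy invariant: if $A'=PAP^{-1}$ and $B'=PBP^{-1}$, then substituting $X\mapsto PXP^{-1}$ and $Y\mapsto PYP^{-1}$ shows $\mathrm{Image}(A'x^{k_1}+B'y^{k_2})=P\bigl(\mathrm{Image}(Ax^{k_1}+By^{k_2})\bigr)P^{-1}$, so the property depends only on the orbit of the pair.

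For necessity I would record the zero-row obstruction. The $i$-th row of a product $MN$ equals the $i$-th row of $M$ times $N$, so if the $i$-th rows of both $A'$ and $B'$ vanish, then the $i$-th row of $A'X^{k_1}+B'Y^{k_2}$ vanishes for all $X,Y$, forcing the image into the proper subspace of matrices with zero $i$-th row. Combined with invariance, this says: if the map is surjective then no representative of the orbit can have a common zero-row position, and in particular $(A,B)$ itself already has disjoint sets of zero rows. This produces the required representative, completing the easy direction.

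For sufficiency I would try to lift the casework of \cref{thm:A} to $\M(n,K)$ by indexing it on the coarse invariant ``set of zero rows'' rather than on a full normal form. The tools carried over are (i) that over an algebraically closed field every invertible matrix is a $k$-th power, and (ii) the scalar-coefficient surjectivity of $\alpha x^{k_1}+\beta y^{k_2}$ from \cite{panja2023surjectivity}. The backbone is an ``invertible coefficient'' reduction: given a target $C$, choose $Y$ so that $C-BY^{k_2}$ is invertible and then solve $X^{k_1}=A^{-1}(C-BY^{k_2})$ by (i); the only obstruction to such a $Y$ is an alignment between the kernels of $C$ and $B$ (a common left-null covector), the exceptional case, which I would handle by dropping to low-rank choices of $X$ and $Y$ reading off the surviving rows block by block, precisely as the $n=2$ tables do. When both $A$ and $B$ are singular, the disjoint-zero-row hypothesis guarantees that every index lies in a nonzero row of at least one of $A,B$, so I would split $\{1,\dots,n\}$ into an $A$-part and a $B$-part and attempt a block-triangular construction, solving the $A$-controlled rows with $X$ and the $B$-controlled rows with $Y$ and then correcting the coupling.

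The hard part is exactly this coupling. For $n\geq 3$ the simultaneous conjugacy problem for $(A,B)$ is wild, so the finite exhaustive analysis that works for $n=2$ cannot be reproduced, and classification must be replaced by a uniform geometric argument. The crux is that a single $X$ (respectively $Y$) governs all rows of $AX^{k_1}$ (respectively $BY^{k_2}$) simultaneously, while the only permitted invariant is which rows vanish; the obstacle is to show, by a dimension count or a well-chosen parametrised family of $(X,Y)$, that an arbitrary and in particular \emph{singular} $C$ still lies in the image in the both-singular regime. I expect this genericity step for singular targets to be the main difficulty, and a successful resolution would most plausibly proceed by induction on $n$, reducing along a common invariant subspace of $A$ and $B$ when one exists and treating the irreducible pairs, where $A$ and $B$ generate $\M(n,K)$, as the base obstruction.
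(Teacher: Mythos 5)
The statement you were given is not a theorem of the paper but the conjecture posed in its concluding remarks; the authors supply no proof beyond the $n=2$ evidence of \cref{thm:A}, so there is nothing to compare your argument against. You read this correctly and only claim the necessity direction outright. That part of your write-up is sound and is precisely the obstruction the paper itself uses throughout its $n=2$ casework: the $i$-th row of $MN$ is the $i$-th row of $M$ times $N$, so a common zero row of a simultaneous conjugate $(A',B')$ confines the image of $A'x^{k_1}+B'y^{k_2}$ to a proper subspace, and conjugation-equivariance of the image transports the obstruction back to $(A,B)$.

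There is, however, a concrete problem with the hypothesis you plan to use for sufficiency, inherited from the conjecture's phrasing. For nonzero $A,B$ over an infinite field the condition ``there exists a representative of the orbit with disjoint places of zero rows'' is \emph{always} satisfied: the left kernels of $A$ and $B$ are proper subspaces of the row space, so a generic invertible $P$ has no row annihilating either matrix, and then $PAP^{-1}$ and $PBP^{-1}$ have no zero rows at all. Concretely, $A=\diag(1,0)$ and $B=\left(\begin{smallmatrix}1&1\\0&0\end{smallmatrix}\right)$ share the left null covector $(0,1)$, so the map is not surjective (this pair appears in \cref{tableIV} with image the first-row matrices), yet conjugating by $P=\left(\begin{smallmatrix}1&1\\1&2\end{smallmatrix}\right)$ yields a representative with every row nonzero. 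So the ``if'' direction, read literally, is false, and your block-splitting strategy --- which takes the disjoint-zero-row property of one chosen representative as its input --- cannot work as stated. The condition your own obstruction argument actually detects, and the one the conjecture must intend, is invariant: no nonzero covector $v$ satisfies $vA=vB=0$, equivalently the columns of $A$ and $B$ together span $K^n$, equivalently \emph{no} simultaneous conjugate of the pair has a common zero row. You should restate both the necessity conclusion and the sufficiency hypothesis in that form. With that repair your plan (invertible-coefficient reduction when possible, a row-splitting construction when both $A$ and $B$ are singular) is a reasonable programme, and you correctly locate the real difficulty in the coupling through a single $X$ and $Y$ and in the wildness of simultaneous conjugacy for $n\geq 3$; but it remains a programme rather than a proof, which is consistent with the statement being left open in the paper.
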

A polynomial $F\in \A\langle x_1, x_2,\ldots, x_n\rangle$ will be called a 
multilinear polynomial if it is linear in each variable. For example
\begin{align*}
    x_1A_1x_2A_2x_3+x_2A_4x_1x_3A_5-x_3x_1A_6x_2,
\end{align*}
where $A_i\in\M(n,K)$, is a multilinear polynomial in $3$ variables in algebra $\M(n,K)$. Generalizing \cref{thm:B}, we present a generalized version of the 
L\'{v}ov-Kaplansky conjecture.
\begin{problem}
    Let $K$ be a field. For any multilinear polynomial in $\omega\in \M(n,K)\langle x_1,x_2,\ldots, x_m\rangle$, the image $\widetilde{\omega}\left(\M(n,K)\right)$ is a vector space.
\end{problem}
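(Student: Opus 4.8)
The plan is to recast the conjecture as a statement about a single linear span and then separate its easy and hard halves. Multilinearity gives closure under scalar multiplication for free: since $\widetilde\omega(\lambda X_1,X_2,\dots,X_m)=\lambda\,\widetilde\omega(X_1,\dots,X_m)$ and $\widetilde\omega(0,\dots,0)=0$, the image is a cone through the origin. Hence the entire content is that $\operatorname{im}\widetilde\omega$ equals its $K$-linear span $V:=\operatorname{span}_K(\operatorname{im}\widetilde\omega)$, which is automatically a subspace. Computing $V$ is mechanical: expanding $\omega$ as a sum of words $A_{0}x_{\sigma(1)}A_{1}\cdots x_{\sigma(m)}A_{m}$ with $A_i\in\M(n,K)$ and using multilinearity gives $V=\operatorname{span}_K\{\widetilde\omega(E_{i_1j_1},\dots,E_{i_mj_m})\}$ over elementary matrices, a finite linear-algebra computation over $K$. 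In the constant-free case this recovers the classical list ($\{0\}$, scalars, $\mathfrak{sl}(n,K)$, or $\M(n,K)$); with matrix constants one expects a longer but still finite catalogue of candidate subspaces, precisely the bookkeeping that the two-variable computations behind \cref{thm:A} and \cref{thm:B} already carry out.

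The substantive task is to prove surjectivity onto $V$, that is, that every element of $V$ is actually attained. Here I would follow \cref{sec:reduction}: simultaneously conjugate all the coefficient matrices $A_i$ to canonical representatives and study the reduced polynomial orbit by orbit. For $n=2$ and arbitrary $m$ this should be provable by an exhaustive case analysis modeled on the proof of \cref{thm:B}. In each orbit one fixes all but one of the variables, so that multilinearity turns $C=\widetilde\omega(X_1,\dots,X_m)$ into a linear system for the entries of the free variable whose coefficient matrix depends polynomially on the fixed entries; one then chooses the fixed entries so that this determinant is nonzero, exactly as in \cref{lem:uniptotent-B-nsing}, and handles the finitely many degenerate orbits (where $V$ is a proper subspace) by direct substitution of rank-one matrices.

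The main obstacle is the passage to general $n$, and it is twofold. First, even when the generic fibre has full rank, the determinant-genericity trick only shows that $\operatorname{im}\widetilde\omega$ is Zariski-dense in $V$, not equal to it; bridging this gap demands either a proof that the image is Zariski-closed or a direct additive-closure argument exhibiting $\widetilde\omega(X)+\widetilde\omega(Y)$ as a single value $\widetilde\omega(Z)$ through a joint substitution. Second, and more seriously, taking all $A_i$ to be scalar specializes the statement to the classical L\'{v}ov--Kaplansky conjecture, which is open for $n\ge 3$; thus no elementary argument can settle the full conjecture. A realistic programme is therefore to prove $n=2$ for all $m$ by the conjugacy-reduction method above, to attack $n=3$ by grafting the matrix-coefficient reduction onto the existing partial results for $\M(3,K)$, and to regard the general case as genuinely open.
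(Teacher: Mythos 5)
First, be aware that the statement you were asked to prove is not a theorem of the paper: it is stated in the \texttt{problem} environment, which the paper defines to print as ``Conjecture,'' and the authors offer no proof of it. It is their proposed generalization of the L\'{v}ov--Kaplansky conjecture, of which the paper only establishes the very special case $\omega=Axy-Byx$ on $\M(2,K)$ (\cref{thm:B}). So there is no proof in the paper to compare yours against.

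Second, your proposal is not a proof either, and to your credit you say so explicitly. What you actually establish is only the trivial half: multilinearity makes the image a cone through the origin, and makes its linear span $V$ finitely computable from the values on tuples of elementary matrices. The substantive claim --- that the image is closed under addition, equivalently that it equals $V$ --- is exactly what you leave as a ``programme,'' and the two obstacles you name are genuine and fatal to the attempt as a proof. The generic-determinant argument, which is the engine of the paper's case analysis for $n=2$, only shows the image is Zariski-dense in $V$ unless one either proves the image is closed or exhibits $\widetilde{\omega}(X)+\widetilde{\omega}(Y)$ as a single value by an explicit joint substitution; and specializing all coefficient matrices to scalars recovers the classical L\'{v}ov--Kaplansky conjecture, open for $n\ge 3$, so no argument of the kind you sketch can settle the general case. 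Even the $n=2$, general-$m$ case that you assert ``should be provable'' is not carried out: the simultaneous-conjugation reduction of \cref{sec:reduction} normalizes only a \emph{pair} of constants, whereas a general multilinear word with constants involves arbitrarily many coefficient matrices interleaved inside each monomial, and no finite orbit catalogue is provided for that situation. In short, the easy reduction is correct, the hard part is untouched, and the statement remains a conjecture both in the paper and after your attempt.
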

\printbibliography
\end{document}